\newcommand{\cQ}{\mathcal{Q}}
\newcommand{\cR}{\mathcal{R}}
\newcommand{\cE}{\mathcal{E}}
\newcommand{\E}{\mathbb{E}}
\newcommand{\bE}{\mathbf{E}}
\newcommand{\R}{\mathbb{R}}
\newcommand{\cX}{\mathcal{X}}
\newcommand{\cD}{\mathcal{D}}
\newcommand{\cF}{\mathcal F}
\newcommand{\e}{\varepsilon}
\newcommand{\f}{\varphi}
\newcommand{\ch}{\operatorname{ch}}
\newcommand{\dd}{\mathrm d}
\newcommand{\1}{\mathbf{1}}
\newcommand{\ci}[1]{_{ {}_{\scriptstyle #1}}}
\newcommand{\ut}[1]{^{\scriptstyle \text{\rm #1}}}
\numberwithin{equation}{section}
\newtheorem{theorem}{Theorem}[section]
\newtheorem{lemma}[theorem]{Lemma}
\newtheorem{proposition}[theorem]{Proposition}
\theoremstyle{remark}
\newtheorem{remark}[theorem]{Remark}
\newtheorem*{remark*}{Remark}
\begin{document}
\title[Superexponential estimates and weighted lower bounds]{Superexponential estimates and weighted \\
lower bounds for the square function}
\author{Paata Ivanisvili and Sergei Treil}
\address{Department of Mathematics, Princeton University; MSRI; UC Irvine, USA}
\email{paatai@princeton.edu \textrm{(P.\ Ivanisvili)}}
\address{Department of Mathematics, Brown University, Providence, RI 02912, USA}
\email{treil@math.brown.edu \textrm{(S.\ Treil)}}
\thanks{This paper is  based upon work supported by the National Science Foundation under Grant No. DMS-1440140 while the authors were in residence at the Mathematical Sciences Research Institute in Berkeley, California, during the Spring  semester of 2017. The work of S.~Treil is also supported by the National Science Foundation under the grant  DMS-1600139}

\begin{abstract} 
We prove  the following superexponential distribution inequality: for  any integrable $g$  on $[0,1)^{d}$ with zero average, and any $\lambda>0$ 
\[
|\{ x \in [0,1)^{d} \; :\; g \geq\lambda \}| \leq e^{- \lambda^{2}/(2^{d}\|S(g)\|_{\infty}^{2})},
\]
where $S(g)$  denotes  the classical dyadic square function in $[0,1)^{d}$. The estimate is sharp when dimension $d$ tends to infinity in the sense that the constant $2^{d}$ in the denominator cannot be replaced by $C2^{d}$ with $0<C<1$ independent of $d$ when $d \to \infty$. 

For $d=1$ this is a classical result of Chang--Wilson--Wolff \cite{ChWiWo1985}; however, in the case $d>1$ they work with special square function $S_\infty$, and their result does not imply the estimates for the classical square function.  

Using good $\lambda$ inequalities technique we then obtain unweighted and weighted  $L^p$ lower bounds for $S$; to get the corresponding good $\lambda$ inequalities we need to modify the classical construction. 

We also show how to obtain our superexponential distribution inequality (although with worse constants) from the weighted $L^2$ lower bounds for $S$, obtained in \cite{DIPTV_2017}. 
\end{abstract}
\maketitle
\section{Introduction}
\subsection{Setup}
On a probability space $(\cX, \cF, \sigma)$ consider a discrete time atomic filtration, i.e.~a sequence of increasing sigma-algebras $\cF_n\subset \cF$, $n\ge 0$ such that  for each $\mathcal{F}_n$ there exists a countable collection $\mathcal{D}_n$ of disjoint sets with the property that every set of $\mathcal{F}_n$ is a union of sets in $\mathcal{D}_n$.  We assume that $\cF_0=\{\varnothing, \cX\}$ and that the $\sigma$-algebra $\cF$ is generated by $\cF_n$. 

A typical example one should have in mind is the standard dyadic filtration on the unit cube in $Q_0=[0,1)^d$ in $\R^d$; the collection $\cD_n$ in this case is the collection of the dyadic subcubes of $Q_0$ of size $2^{-n}$.

We denote $\cD=\bigcup_{n\ge 0}\cD_n$, and we will call the elements of $\cD$ \emph{atoms} or \emph{cubes} (by analogy with the dyadic filtration). We will often use the notation $|A|$ for $\sigma(A)$ and $\dd x$ for $\dd \sigma(x)$. 

For a cube $Q\in\cD_n$ we denote by $\ch Q$ the collection of its children, 
\[
\ch Q=\{R\in\cD_{n+1}: R\subset Q\}. 
\] 

For  $f \in L^{1}(\cX)$, and any measurable $A\subset \cX$ with $|A|>0$,  we define the average 
\begin{align*}
\langle f\rangle\ci{A} := \frac{1}{|A|}\int_{A} f; 
\end{align*}
if $|A|=0$ we just set $\langle f\rangle\ci{A}=0$.

We say that a filtration is \emph{$\alpha$-homogeneous}, $0<\alpha\le 1/2$,  if for any cube $Q$ and for any its child $Q'$
\[
|Q'| \ge \alpha |Q|. 
\]
The standard dyadic filtration in $\R^d$ is clearly $\alpha$-homogeneous with $\alpha=2^{-d}$. 

As a final remark, we can always assume without loss of generality that our probability space $\cX$ is the unit interval $[0,1)$, which gives another justification for the notation for the measure.  

\subsection{Square functions}
Define the expectation operators
\begin{align*}
\bE\ci Q f (x) := \langle  f \rangle\ci Q \1\ci Q(x), \qquad 
\bE_n f := \sum_{Q\in\cD_n} \bE\ci Q f , 
\end{align*}
and the martingale difference operators
\begin{align*}
\Delta\ci Q := \sum_{R\in\ch Q} \bE\ci R\quad -  \bE\ci Q, \qquad \Delta_n := \bE_{n+1} - \bE_n = \sum_{Q\in\cD_n} \Delta\ci Q. 
\end{align*}
Recall that the classical martingale  square function $S$ is defined as 
\begin{align}\label{dd1}
S f := \left(\sum_{n=0}^{\infty} |\Delta_n f|^{2} \right)^{1/2} = \Biggl(\sum_{Q\in\cD} |\Delta\ci Q f|^{2} \Biggr)^{1/2}.
\end{align}

One can define other non-classical square functions $S_p$, 
\begin{align*}
S_p f & := \Biggl( \sum_{Q\in\cD} \langle |\Delta\ci Q f|^p \rangle\ci Q^{2/p} \1\ci Q \Biggr)^{1/2}\\
S_\infty f & := \Biggl( \sum_{Q\in\cD} \|\Delta\ci Q f\|_\infty^2  \1\ci Q \Biggr)^{1/2}
\end{align*}


In the case of dyadic filtration in $\R$ all the above square functions coincide; for more complicated filtration they are different. 

In the case of $\alpha$-homogeneous filtration the square functions $S_p$ are equivalent in the sense of two sided pointwise estimate
\begin{align}
\label{S_p-equiv}
S_\infty f(x) \le C(\alpha, p) S_pf (x) \le S_\infty f(x). 
\end{align}
For general filtrations only trivial monotonicity relations hold, 
\[
S_p f(x) \le S_q f(x) \qquad p\le q. 
\]

\subsection{Superexponential estimates of Chang--Wilson--Wolff}
If $S_\infty f\in L^\infty$, then  the distribution function of $f$ decays superexponentially, i.e.~for all $\lambda\ge 0$
\begin{align}\label{distb}
\left|  \left \{ x  \in \cX\; : \; f(x) - \langle f \rangle\ci{\cX} \geq  \lambda  \right\} \right| \leq  e^{-\lambda^{2}/(2 \|S_\infty f\|_{\infty}^{2})} . 
\end{align}
This result was stated in  \cite[Theorem 3.1]{ChWiWo1985} for the classical dyadic filtration in $\R^d$ there, but the proof works for an arbitrary atomic  filtration.

The superexponential bound \eqref{distb} is the square function analog of the classical Gaussian concentration inequality. The proof of \eqref{distb} presented in \cite{ChWiWo1985} is a slight adaptation of the classical Hoeffding's inequality~\cite{hoef} -- well known in probability (see also \cite{azuma}).  Essentially the argument relies on the Hoeffding's lemma which says that if the random variable $X$ has mean zero and $X$ is bounded  almost surely then  
\begin{align}\label{hoeff}
\mathbb{E} e^{X}  \leq e^{\|X\|^{2}_{\infty}/2}.
\end{align}
In case of Chang--Wilson--Wolff's superexponential bound one uses Hoeffding's inequality \eqref{hoeff} with $X$ replaced by $t(f_{n+1}-f_{n})\mathbbm{1}_{Q}$ with $|Q|=2^{-dn}$.

The superexponential estimate \eqref{distb} allows Wilson~\cite{Wilson1989-2} (see the first lemma)  to obtain weighted $L^{p}$ estimates for the square function $S_\infty f$ in terms of the maximal function $f^*$, see \eqref{f^*} below for the definition.  Namely for any $0<p<\infty$ we have 
\begin{align}\label{wilson}
\int |f^*|^{p} w\dd x \underset{p}{\lesssim}  [w]^{p/2}_{\infty} \int (S_\infty  f)^{p} w \dd x. 
\end{align}

In this paper we prove for the  $\alpha$-homogeneous filtration the inequalities \eqref{distb} and \eqref{wilson} for the classical square function $S$ instead of $S_\infty$.

For general atomic filtrations \eqref{wilson} fails, there is a counterexample for $p=2$ in \cite{DIPTV_2017}. 



\subsection{An example}\label{primery}
Let us present a function $f$ on a unit cube $Q_0=[0, 1)\times [0, 1)$ such that $Sf \in L^\infty$ but $S_\infty f$ is unbounded. 

For an interval $I$ let $I_-$ and $I_+$ be its left and right halves respectively. Let $h\ci I$ denotes  the $L^\infty$ normalized Haar function, $h\ci I = \1\ci{I_+}-\1\ci{I_-}$.  For a dyadic square $Q=I^1\times I^2$ define 
\begin{align*}
f\ci Q (x) := 1_{I^1_+}(x_1) h_{I^2} (x_2), \qquad x=(x_1,x_2)\in \R^2. 
\end{align*}
Let $\cQ_k$, $k=1, 2, \ldots$ be the collection of all squares of the form
\begin{align*}
Q=[0, 2^{-k})\times I, 
\end{align*}
where $I$ runs over all dyadic subintervals of $[0,1)$ of length $2^{-k}$, and let $\cQ:= \bigcup_{k\ge 0} \cQ_k$.  

Define $f:= \sum_{Q\in\cQ} f\ci Q$. Clearly 
\begin{align*}
\Delta\ci Q f = \left\{ \begin{array}{ll}
f\ci Q \qquad & Q\in \cQ , \\ 0 &Q\notin \cQ .
\end{array}   \right. 
\end{align*}
It is easy to see that $Sf=\1\ci{Q_0}$, but $S_\infty f$ is unbounded (goes to $\infty$ as $x_1\to 0$) because the squares $Q\in\cQ$ have unbounded overlap.

%

The example illustrates that in higher dimensions   one may not expect the bound \eqref{wilson} but we will show that this is not the case. 
%
%
%


\section{Main results}

For typographical reasons we will sometimes  use the symbol $\E f$ for the average over the whole space $\cX$, $\E f := \langle f\rangle\ci\cX$. 

The main result of the paper is the following theorem, generalizing the Chang--Wilson--Wolff estimate \eqref{distb} to  the standard martingale square function \eqref{dd1} in the homogeneous filtration. 

\begin{theorem}\label{trpi0}
For an $\alpha$-homogeneous filtration, any $f\in L^1$ with $S(f) \in L^{\infty}$ we have   
\begin{align}\label{trpi1}
\mathbb{E}   \exp\left(f- {S(f)^{2}}/{4\alpha} \right) \leq \exp\left(\mathbb{E} f\right).
\end{align}
\end{theorem}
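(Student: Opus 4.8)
The plan is to prove the exponential bound \eqref{trpi1} by induction on the filtration level, exploiting the martingale structure together with the homogeneity hypothesis $|Q'|\ge \alpha|Q|$ in place of the boundedness of individual martingale differences used in the Chang--Wilson--Wolff argument. Write $f_n = \bE_n f$ for the martingale, so that $f = f_n + \sum_{k\ge n}\Delta_k f$ (interpreting convergence appropriately), and set $S_n(f)^2 = \sum_{k\ge n}|\Delta_k f|^2$, the ``tail'' of the square function, so $S(f)^2 = S_0(f)^2$. The idea is to establish, for every $n$ and every cube $Q\in\cD_n$, the local estimate
\begin{align}\label{plan-local}
\Bigl\langle \exp\bigl(f - S_n(f)^2/(4\alpha)\bigr)\Bigr\rangle\ci Q \le \exp\bigl(\langle f\rangle\ci Q\bigr),
\end{align}
and then let $n\to\infty$: as $n\to\infty$ we have $f_n\to f$ a.e.\ and $S_n(f)\to 0$, while for $n=0$, since $\cF_0=\{\varnothing,\cX\}$ and $S_0(f)=S(f)$, inequality \eqref{plan-local} is exactly \eqref{trpi1}. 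So the real content is a downward induction step: assuming \eqref{plan-local} holds for all children of $Q$ (at level $n+1$), deduce it for $Q$ at level $n$.

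For the induction step, fix $Q\in\cD_n$ with children $R_1,\dots,R_m\in\ch Q$. On each child $R_j$ the function $f$ differs from its $Q$-average by the martingale increment: $\langle f\rangle\ci{R_j} = \langle f\rangle\ci Q + (\Delta\ci Q f)|_{R_j} =: \langle f\rangle\ci Q + a_j$, where $a_j$ is the constant value of $\Delta\ci Q f$ on $R_j$. Also $S_n(f)^2 = S_{n+1}(f)^2 + |\Delta\ci Q f|^2$, so on $R_j$ we have $S_n(f)^2 = S_{n+1}(f)^2 + a_j^2$. Using the inductive hypothesis on each $R_j$,
\begin{align}\label{plan-step}
\Bigl\langle \exp\bigl(f - S_n(f)^2/(4\alpha)\bigr)\Bigr\rangle\ci Q
&= \sum_j \frac{|R_j|}{|Q|}\, e^{-a_j^2/(4\alpha)} \Bigl\langle \exp\bigl(f - S_{n+1}(f)^2/(4\alpha)\bigr)\Bigr\rangle\ci{R_j} \notag\\
&\le \sum_j \frac{|R_j|}{|Q|}\, e^{-a_j^2/(4\alpha)}\, e^{\langle f\rangle\ci{R_j}}
= e^{\langle f\rangle\ci Q} \sum_j p_j\, e^{a_j - a_j^2/(4\alpha)},
\end{align}
where $p_j = |R_j|/|Q|$. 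So the induction closes provided $\sum_j p_j\, e^{a_j - a_j^2/(4\alpha)} \le 1$, under the constraints $p_j\ge \alpha$, $\sum_j p_j = 1$, and the zero-average condition $\sum_j p_j a_j = 0$ (which holds because $\Delta\ci Q f$ integrates to zero over $Q$).

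Thus everything reduces to the elementary inequality: if $p_j\ge\alpha>0$, $\sum p_j=1$, $\sum p_j a_j = 0$, then $\sum_j p_j e^{a_j - a_j^2/(4\alpha)}\le 1$. I expect this to be the main obstacle and the technical heart of the argument — it is the place where the homogeneity constant $\alpha$ enters, playing the role that the $L^\infty$ bound on $\Delta\ci Q f$ plays in Hoeffding's lemma \eqref{hoeff}. To prove it I would argue pointwise: it suffices to show $e^{a - a^2/(4\alpha)} \le 1 + a + c\,a^2$ for a suitable linear-plus-quadratic majorant chosen so that, after multiplying by $p_j$ and summing, the linear term vanishes (by $\sum p_j a_j=0$) and the quadratic term is controlled using $p_j\ge\alpha$, i.e.\ $\sum p_j \cdot c\, a_j^2 \le$ something $\le 0$. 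More precisely, since the individual $a_j$ need not be bounded but the constraint $p_j\ge\alpha$ forces $\sum_{k\ne j}p_k\le 1-\alpha$ hence controls how extreme a single $a_j$ can be relative to the others, one shows $t - t^2/(4\alpha)$ is dominated in the relevant range by an affine function tangent appropriately, or directly verifies $\sum_j p_j e^{a_j-a_j^2/(4\alpha)}\le 1$ by a convexity/Lagrange-multiplier analysis of the maximum over the constraint set (the extremum is attained at a two-point configuration, reducing to a one-variable calculus inequality). Once this scalar inequality is in hand, \eqref{plan-step} gives the induction step, the limit $n\to\infty$ is justified by the monotone/dominated convergence together with $S(f)\in L^\infty$, and we recover \eqref{trpi1} at $n=0$.
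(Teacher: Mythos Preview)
Your reduction is exactly the paper's: peel off one martingale difference at a time and reduce to the scalar inequality
\[
\sum_j p_j\, e^{a_j - a_j^2/(4\alpha)} \le 1 \qquad \text{whenever } p_j\ge\alpha,\ \sum p_j=1,\ \sum p_j a_j=0.
\]
Two points deserve tightening.

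\textbf{The induction needs a finite base case.} As written, your ``downward induction with base at $n\to\infty$'' is not an induction. The fix is the standard one (and is what the paper does): first replace $f$ by $f_N=\bE_N f$; then for $Q\in\cD_N$ the local estimate is an equality, and you induct downward from $N$ to $0$. Passing $N\to\infty$ afterwards uses $S(f_N)\le S(f)\in L^\infty$ and uniform integrability, exactly as you indicate.

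\textbf{The pointwise majorant route fails; the two-point reduction is the right one, but it is delicate.} Your first idea, bounding $e^{a-a^2/(4\alpha)}$ by $1+a+c\,a^2$ with $c\le 0$, cannot work globally: for $a<-1$ the right side is negative while the left is positive. Your second idea --- reduce to a two-point configuration by convexity/Lagrange --- is correct and is precisely the paper's route, but be warned that $g(a)=e^{a-a^2/(4\alpha)}$ is concave on $[2\alpha-\sqrt{2\alpha},\,2\alpha+\sqrt{2\alpha}]$ and convex outside, so the reduction is not a single convexity step: one first merges points in the concave region, then pushes points in each convex tail together, then uses the symmetry of $g$ about $a=2\alpha$ to eliminate one tail. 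After all this one lands on the two-point inequality with $p_2=\alpha$, $p_1=1-\alpha$, equivalently
\[
(n-1)\,e^{(1-a^2)/n} + e^{(1-(n-(n-1)a)^2)/n} \le n \qquad (n=1/\alpha,\ a\in[0,1]),
\]
which is not a one-line calculus check; the paper devotes a separate proposition to it. So your outline is correct and coincides with the paper's argument, but the ``one-variable calculus inequality'' at the end is where most of the work hides.
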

The theorem immediately gives the estimate 
\begin{theorem}\label{Slavin}
Assume that for an $\alpha$-homogeneous filtration and $f\in L^1$ we have $\|S(f)\|_{\infty} < \infty$. Then 
\begin{align*}
\mathbb{E} \exp(f - \E f) \leq e^{\|S(f)\|^{2}_{\infty}/4\alpha}.
\end{align*}
\end{theorem}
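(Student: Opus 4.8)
The plan is to deduce Theorem~\ref{Slavin} directly from Theorem~\ref{trpi0} by a trivial monotonicity/domination argument. Since $\|S(f)\|_\infty < \infty$, we have $S(f) \in L^\infty$, so Theorem~\ref{trpi0} applies and gives
\[
\mathbb{E} \exp\left(f - S(f)^2/4\alpha\right) \le \exp(\mathbb{E} f).
\]
Pointwise we have $S(f)(x)^2 \le \|S(f)\|_\infty^2$ for a.e.\ $x$, hence $-S(f)(x)^2/4\alpha \ge -\|S(f)\|_\infty^2/4\alpha$, and therefore
\[
\exp\left(f - \|S(f)\|_\infty^2/4\alpha\right) \le \exp\left(f - S(f)^2/4\alpha\right)
\]
pointwise. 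Taking expectations and combining with Theorem~\ref{trpi0} yields
\[
\exp\left(-\|S(f)\|_\infty^2/4\alpha\right)\, \mathbb{E} \exp(f) = \mathbb{E}\exp\left(f - \|S(f)\|_\infty^2/4\alpha\right) \le \mathbb{E}\exp\left(f - S(f)^2/4\alpha\right) \le \exp(\mathbb{E} f),
\]
so that $\mathbb{E}\exp(f) \le \exp\left(\mathbb{E} f + \|S(f)\|_\infty^2/4\alpha\right)$, which is exactly the claimed bound after dividing by $\exp(\mathbb{E} f)$ (note $\mathbb{E} f$ is finite since $f \in L^1$, and one should check $\mathbb{E}\exp(f) < \infty$, which follows from the superexponential decay already implicit in Theorem~\ref{trpi0}).

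There is essentially no obstacle here; the only minor point to be careful about is integrability, i.e.\ that $\mathbb{E}\exp(f)$ is finite so that the division by $\exp(\mathbb{E} f)$ and the manipulations above are legitimate rather than formal. This is guaranteed by Theorem~\ref{trpi0} itself: the left-hand side $\mathbb{E}\exp(f - S(f)^2/4\alpha)$ is finite, and multiplying the integrand by the constant $\exp(\|S(f)\|_\infty^2/4\alpha)$ keeps it finite, giving $\mathbb{E}\exp(f) < \infty$. Thus the proof is a one-line consequence: bound $S(f)^2$ from above by its supremum inside the exponential and invoke Theorem~\ref{trpi0}.
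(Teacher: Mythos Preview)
Your proposal is correct and matches the paper's approach exactly: the paper simply says ``The theorem immediately gives the estimate'' before stating Theorem~\ref{Slavin}, with no further proof provided, since bounding $S(f)^2$ by $\|S(f)\|_\infty^2$ inside the exponential and applying Theorem~\ref{trpi0} is the intended one-line argument. Your remarks on integrability are a helpful clarification but not strictly needed, as the inequality $\mathbb{E}\exp(f-\E f)\le e^{\|S(f)\|_\infty^2/4\alpha}$ holds trivially (as $\infty\le\infty$) if the left side were infinite.
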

It follows from Markov's inequality that 
\begin{align*}
|\{ x \in \cX: f(x) -\E f > \lambda \}| =|\{x\in \cX: \exp(t(f(x) -\E f)) > \exp(t\lambda)\}| \leq  e^{t^{2}\|S(f)\|^{2}_{\infty}/4\alpha - t\lambda }
\end{align*}
Optimizing over all $t$ (the minimum is attained at $t= 2\alpha\lambda/\|Sf\|_\infty^2$) we obtain the following superexponential bound for the distribution function.   
\begin{theorem}
	\label{t:genCWW}
Under assumptions of Theorem  \ref{Slavin} we have  for any $\lambda \geq 0$ 
\begin{align}\label{ext}
\left| \{ x\in \cX: f(x) -\E f > \lambda    \}  \right| \leq  e^{-\alpha\lambda ^{2}/ \|S f\|_{\infty}^2}.
\end{align}
\end{theorem}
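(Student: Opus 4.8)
The plan is to derive Theorem~\ref{t:genCWW} directly from Theorem~\ref{Slavin}, following exactly the chain of elementary estimates already sketched in the excerpt. The only real content is the exponential moment bound; everything after that is an application of Markov's inequality and an optimization in one parameter, so the work will be bookkeeping rather than ideas.

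First I would introduce a free parameter $t>0$ and apply Theorem~\ref{Slavin} to the function $tf$ in place of $f$. Since $S$ is positively homogeneous, $S(tf)=tS(f)$, so $\|S(tf)\|_\infty = t\|S(f)\|_\infty <\infty$ and Theorem~\ref{Slavin} yields
\begin{align*}
\mathbb{E}\exp\bigl(t(f-\E f)\bigr) = \mathbb{E}\exp\bigl(tf - \E(tf)\bigr) \le \exp\bigl(t^2\|S(f)\|_\infty^2/(4\alpha)\bigr).
\end{align*}
Next, for $\lambda\ge 0$ I would apply Markov's inequality to the nonnegative random variable $\exp(t(f-\E f))$: the event $\{f-\E f>\lambda\}$ is contained in $\{\exp(t(f-\E f))>e^{t\lambda}\}$, whence
\begin{align*}
\left|\{x\in\cX: f(x)-\E f>\lambda\}\right| \le e^{-t\lambda}\,\mathbb{E}\exp\bigl(t(f-\E f)\bigr) \le \exp\bigl(t^2\|S(f)\|_\infty^2/(4\alpha) - t\lambda\bigr).
\end{align*}

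Finally I would optimize the exponent $\varphi(t) := t^2\|S(f)\|_\infty^2/(4\alpha) - t\lambda$ over $t>0$. This is a quadratic in $t$ with positive leading coefficient; setting $\varphi'(t)=0$ gives $t = 2\alpha\lambda/\|S(f)\|_\infty^2$, which is nonnegative, and plugging back in gives the minimal value $-\alpha\lambda^2/\|S(f)\|_\infty^2$. (When $\|S(f)\|_\infty = 0$ the function $f$ is a.e.\ constant equal to $\E f$, so the left side is $0$ for $\lambda>0$ and the bound is trivial; for $\lambda=0$ the estimate reads $|\{f-\E f>0\}|\le 1$, also trivial.) Substituting this choice of $t$ into the previous display yields \eqref{ext}.

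There is essentially no obstacle here: the substantive inequality is Theorem~\ref{Slavin}, which we are permitted to assume, and the rest is the standard Chernoff/Markov optimization. The only minor point worth stating carefully is the degenerate case $\|S(f)\|_\infty=0$ and the verification that the optimal $t$ is in the admissible range $t>0$, both of which are immediate.
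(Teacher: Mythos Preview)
Your proof is correct and follows exactly the same route as the paper: apply Theorem~\ref{Slavin} to $tf$, use Markov's inequality to get $\exp(t^2\|Sf\|_\infty^2/4\alpha - t\lambda)$, and minimize over $t>0$ at $t=2\alpha\lambda/\|Sf\|_\infty^2$. Your explicit handling of the degenerate cases $\|Sf\|_\infty=0$ and $\lambda=0$ is a harmless addition that the paper leaves implicit.
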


Recall that the martingale maximal function $f^*$ is defined as 
\begin{align}
\label{f^*} 
f^*(x) := \sup \{ |\bE_n f (x)| : {n\ge 0} \}. 
\end{align}

As a corollary of Theorem \ref{t:genCWW} we get the following distribution inequality for $f^*$, see Section \ref{s:ProofOf_genCWWmax} for the proof. 

\begin{theorem}
	\label{t:genCWWmax}
	Let for an $\alpha$-homogeneous filtration and a real-valued function $f\in L^1$ we have $Sf\in L^\infty$ and $\E f =0$. Then 
	\begin{align}
	\label{distr_of_psi^*}
	\left| \{ x\in \cX: f^*(x)  > \lambda    \}  \right| \leq 2 e^{-\alpha\lambda ^{2}/ \|S f\|_{\infty}^2}.
	\end{align}
\end{theorem}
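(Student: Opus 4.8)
The plan is to reduce the distribution inequality for the maximal function $f^*$ to the one-sided bound \eqref{ext} for $f$ itself, by exploiting the fact that on an atomic filtration the maximal function is controlled by stopping times at which a martingale has been \emph{stopped}, and the stopped martingale has a square function no larger than that of $f$. Concretely, fix $\lambda>0$ and let $\tau$ be the first $n$ for which $|\bE_n f|>\lambda$; on the set $E:=\{f^*>\lambda\}$ this $\tau$ is finite. The key observation is that the stopped martingale $g:=\bE_\tau f$ (i.e. $g=\sum_n \1_{\{\tau>n\}}\Delta_n f$, or equivalently $g$ has martingale differences $\Delta_n g = \1_{\{\tau>n\}}\Delta_n f$) satisfies $\E g = \E f = 0$ and $S(g)\le S(f)$ pointwise, hence $\|S(g)\|_\infty\le\|S(f)\|_\infty$; moreover on $E$ we have $|g|>\lambda$. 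Actually a slightly cleaner route is to note that $\{f^*>\lambda\}$ is a union of disjoint atoms $Q$ (those $Q$ that are the first along their branch with $|\langle f\rangle_Q|>\lambda$), so $E=\bigsqcup Q_j$ and on each $Q_j$, $|\langle f\rangle_{Q_j}|>\lambda$.

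Then I would estimate $|E|$ as follows. Split $E=E_+\cup E_-$ where $E_\pm$ collects those stopping atoms with $\langle f\rangle_{Q_j}>\lambda$, resp.\ $<-\lambda$. For $E_+$, consider the stopped martingale $g_+$ whose value on $E_+$ equals the stopping value (so $g_+\ge\lambda$ on $E_+$ pointwise, in fact $g_+ = \langle f\rangle_{Q_j}$ on $Q_j$ and $g_+=f$ once we have not stopped — more precisely take $g_+ := \bE_{\tau_+} f$ with $\tau_+$ the first $n$ with $\langle f\rangle$ exceeding $\lambda$ upward). This $g_+$ has $\E g_+ = 0$, $S(g_+)\le S(f)$, and $\{g_+>\lambda\}\supseteq E_+$, so by \eqref{ext} applied to $g_+$,
\begin{align*}
|E_+| \le |\{g_+ > \lambda\}| \le e^{-\alpha\lambda^2/\|S(g_+)\|_\infty^2} \le e^{-\alpha\lambda^2/\|S(f)\|_\infty^2}.
\end{align*}
Applying the same argument to $-f$ (which has the same square function) bounds $|E_-|\le e^{-\alpha\lambda^2/\|S(f)\|_\infty^2}$, and adding the two gives the factor $2$ in \eqref{distr_of_psi^*}.

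The main technical point to get right — and the step I expect to require the most care — is the verification that the stopped martingale $g_+$ is genuinely of the required form: that it lies in $L^1$ with $\E g_+ = \E f$, that its martingale differences are $\1_{\{\tau_+>n\}}\Delta_n f$ so that $S(g_+)\le S(f)$ pointwise, and that $g_+$ really does dominate $\lambda$ on $E_+$. This is standard optional-stopping bookkeeping for atomic filtrations, but one must be slightly careful because \eqref{ext} is stated for functions with $S(f)\in L^\infty$, so one should check $S(g_+)\in L^\infty$ (immediate from $S(g_+)\le S(f)$) and that $g_+$ is a legitimate integrable function; since $g_+$ is a bounded-square-function martingale with mean zero on a probability space, $g_+\in L^2\subset L^1$ follows from $\|g_+\|_2^2 = \|S(g_+)\|_2^2\le \|S(f)\|_\infty^2<\infty$. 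Everything else is a direct application of Theorem~\ref{t:genCWW}.
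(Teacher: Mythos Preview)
Your proposal is correct and is essentially the same argument the paper gives: stop the martingale at the first time the absolute value of the running average exceeds $\lambda$, observe that the stopped function $\tilde f$ has $S\tilde f\le Sf$ pointwise and that $\{f^*>\lambda\}=\{|\tilde f|>\lambda\}$, then apply Theorem~\ref{t:genCWW} once to $\tilde f$ and once to $-\tilde f$ to pick up the factor~$2$. Your variant with separate one-sided stopping times $\tau_\pm$ is a cosmetic difference; the paper uses the single two-sided stopping time, which makes the identity $\{f^*>\lambda\}=\{|\tilde f|>\lambda\}$ exact rather than an inclusion, but the outcome is the same.
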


Using good $\lambda$ inequalities we get from the above Theorem \ref{t:genCWWmax} the following lower bound for the martingale square function $S$. 

Recall that a weight (i.e.~a non-negative integrable function) $w$ is said to satisfy the \emph{martingale} $A_\infty$-condition if
\begin{align}
\sup \left\{ {\langle (1\ci Q w)^* \rangle\ci Q}/{\langle w \rangle\ci Q} \right\} =: [w]\ci{A_\infty}<\infty;
\end{align}
the number $[w]\ci{A_\infty}$ is called the $A_\infty$ characteristic of the weight $w$. 

In this paper we will skip the word \emph{martingale}, and just say ``$A_\infty$ weight''.


\begin{theorem}
	\label{t:weighted}
	For an $\alpha$-homogeneous filtration let  $w$ be an $A_\infty$ weight. Then for any $f\in L^1$ and any $p$, $0< p<\infty$
	\begin{align}
	\label{weighted_f-S}
	\|f^*\|\ci{L^p(w)} \le C(\alpha, p)[w]\ci{A_\infty}^{1/2}\|Sf\|\ci{L^p(w)}, \qquad C(\alpha, p)= 2^{1/p}2^{(p+4)/2}\alpha^{-3/2} (\ln(2/\alpha))^{1/2}. 
	\end{align}
\end{theorem}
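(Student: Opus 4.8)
The plan is to derive the weighted bound \eqref{weighted_f-S} from the distributional inequality for $f^*$ in Theorem \ref{t:genCWWmax} via the good-$\lambda$ inequalities method, but with the standard construction modified to make the estimates survive the passage to $A_\infty$ weights. First I would recall the general principle: if one has a pointwise-local version of \eqref{distr_of_psi^*} — that is, the same superexponential bound but for $f$ localized to a single atom $Q$, with $\E f$ and $\|Sf\|_\infty$ replaced by their localizations to $Q$ — then a standard stopping-time decomposition produces a good-$\lambda$ inequality of the form
\begin{align*}
\left|\{x : f^*(x) > 2\lambda,\ Sf(x) \le \gamma\lambda\}\right| \le C e^{-c\alpha/\gamma^2}\, |\{x : f^*(x) > \lambda\}|,
\end{align*}
valid for all $\lambda>0$ and all small $\gamma>0$, where the constant $c$ is absolute. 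The localization is where the $\alpha$-homogeneity is essential, and this is exactly the point where the classical construction must be modified (as the abstract announces): one wants to run the stopping time on $f^*$, but on the stopped atoms one has no control on $\E f$ relative to $\langle Sf\rangle$, so one instead stops and restarts the martingale from scratch on each maximal stopping cube, applying Theorem \ref{t:genCWWmax} to the restarted increment $f - \bE\ci Q f$ on $Q$, whose square function is dominated pointwise by $Sf$ and whose behavior on $Q$ is governed by $\|Sf\cdot\1\ci Q\|_\infty$.

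Second, I would upgrade the good-$\lambda$ inequality to a weighted good-$\lambda$ inequality: because $w\in A_\infty$, the Lebesgue-measure comparison on the stopping cubes transfers to $w$-measure with the loss $[w]\ci{A_\infty}$ — concretely, for the maximal stopping cubes $Q_j$ in the level set $\{f^*>\lambda\}$ one has $w(E\cap Q_j)\le (\text{something})\, [w]\ci{A_\infty}^{?}\, (\,|E\cap Q_j|/|Q_j|\,)^{?}\, w(Q_j)$ after the John–Nirenberg-type self-improvement that $A_\infty$ provides; summing over $j$ gives
\begin{align*}
w\{f^*>2\lambda,\ Sf\le\gamma\lambda\} \le C e^{-c\alpha/([w]\ci{A_\infty}\gamma^2)}\, w\{f^*>\lambda\}.
\end{align*}
Third, with such a weighted good-$\lambda$ inequality in hand, the passage to $L^p(w)$ norms is the routine layer-cake integration: multiply by $p\lambda^{p-1}$, integrate in $\lambda$ over $(0,\infty)$, use the trivial bound $w\{Sf>\gamma\lambda\}$ for the remainder, and choose $\gamma$ (depending on $\alpha$, $p$, and $[w]\ci{A_\infty}$) small enough that the geometric-type series converges with a factor $\le 1/2$; absorbing $\|f^*\|\ci{L^p(w)}$ (which is finite — or one first works with truncations $f^*\wedge N$ and lets $N\to\infty$) yields $\|f^*\|\ci{L^p(w)}\le C'(\alpha,p)[w]\ci{A_\infty}^{1/2}\|Sf\|\ci{L^p(w)}$, and tracking the constants through the optimization of $\gamma$ produces exactly $C(\alpha,p)=2^{1/p}2^{(p+4)/2}\alpha^{-3/2}(\ln(2/\alpha))^{1/2}$ — the $\alpha^{-3/2}$ coming from one power of $\alpha^{-1}$ in the exponent of the good-$\lambda$ bound combined with an extra $\alpha^{-1/2}$ lost in making the restarted-martingale localization honest, and the logarithm from solving $e^{-c\alpha/([w]\ci{A_\infty}\gamma^2)}\sim 2^{-p}$ for $\gamma$.

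The main obstacle is the modified good-$\lambda$ construction in the first step. The classical Chang–Wilson–Wolff / Burkholder good-$\lambda$ argument controls $\{Sf > 2\lambda,\ f^*\le\gamma\lambda\}$, i.e.\ it stops on the square function and uses boundedness of the maximal function on the complement; here the inequality \eqref{distr_of_psi^*} goes the other way (it bounds $f^*$ in terms of $\|Sf\|_\infty$), so one must stop on $f^*$ and then apply a Chang–Wilson–Wolff-type bound \emph{conditionally on each stopping cube}, which forces a careful restarting of the martingale and a verification that the conditional square function is still controlled by $\gamma\lambda$ uniformly — and it is here, not in the measure-theoretic bookkeeping, that the factor $\alpha^{-1/2}$ beyond the naive count enters and that genuine care (as opposed to citing a textbook) is needed. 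Everything downstream — the $A_\infty$ transfer and the $L^p$ integration — is standard once this localized estimate is correctly set up.
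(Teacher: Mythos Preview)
Your overall architecture --- unweighted good-$\lambda$ inequality on each maximal cube in $\{f^*>\lambda\}$, then $A_\infty$ transfer, then layer-cake integration with an optimized $\gamma$ --- matches the paper's. But the heart of the matter, the localized good-$\lambda$ bound itself, is not what you describe. Restarting the martingale on a maximal stopping cube $Q$ as $f_1:=(f-\langle f\rangle\ci Q)\1\ci Q$ is only the first (and standard) step; it is \emph{not} true that ``its behavior on $Q$ is governed by $\|Sf\cdot\1\ci Q\|_\infty$'': you only know $Sf(x)\le\e\lambda$ on the exceptional set $E\ci Q$, not on all of $Q$, so you cannot apply Theorem~\ref{t:genCWWmax} to $f_1$. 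One must stop a \emph{second} time, on the maximal cubes $R$ where $Sf_1$ first reaches $\e\lambda$, and discard all martingale differences below them, producing $f_2$. For the square function $S_\infty$ this already gives $\|S_\infty f_2\|_\infty\le\e\lambda$ and the argument closes; but for the classical $S$ the values $|\Delta\ci{\widehat R}f|$ on the stopping children $R\in\cR$ are completely uncontrolled --- cancellation among siblings can make them arbitrarily large even though $Sf_2\le\e\lambda$ off $\bigcup_{R\in\cR}R$. The paper's actual fix is to replace each $\Delta\ci{\widehat R}f$ on those bad children by the common average $\langle\Delta\ci{\widehat R}f\rangle\ci{\cup_k R_k}$, obtaining a third martingale $f_3$ with $E\ci Q\subset\{f_3^*>(1-\e)\lambda\}$ and $\|Sf_3\|_\infty^2\le(\e\lambda)^2(1-2\alpha+2\alpha^2)/\alpha^2$. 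This averaging step is the ``modified construction'' the abstract advertises, and it is the missing idea in your outline; your phrase ``a verification that the conditional square function is still controlled by $\gamma\lambda$ uniformly'' names the difficulty but not the mechanism that resolves it.

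As a consequence your constant bookkeeping is off. The good-$\lambda$ bound on $|E\ci Q|/|Q|$ carries $\exp\bigl(-c\,\alpha^3/\e^2\bigr)$, not $\exp(-c\,\alpha/\e^2)$: one factor of $\alpha$ from Theorem~\ref{t:genCWWmax} and two more from the blow-up $\|Sf_3\|_\infty^2\lesssim\alpha^{-2}(\e\lambda)^2$. The $\alpha^{-3/2}$ in $C(\alpha,p)$ then comes entirely from solving $\alpha^3/\e^2\gtrsim [w]\ci{A_\infty}(p+2)\ln(2/\alpha)$ for $\e$, not from ``$\alpha^{-1}$ in the exponent combined with an extra $\alpha^{-1/2}$.'' Likewise the logarithm $\ln(2/\alpha)$ does not come from solving $e^{-c\alpha/([w]\ci{A_\infty}\gamma^2)}\sim 2^{-p}$ (that equation produces no logarithm); it comes from the paper's $A_\infty$ transfer lemma (Lemma~\ref{l:w(E)}), which yields the logarithmic-type bound $w(E\ci Q)/w(Q)\le 2[w]\ci{A_\infty}\ln(2/\alpha)/\ln(|Q|/|E\ci Q|)$ rather than the power-type bound $(|E\ci Q|/|Q|)^{c/[w]\ci{A_\infty}}$ you seem to have in mind.
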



\section{Proof of superexponential estimates}
\subsection{Proof of Theorem~\ref{trpi0}}
Notice that it is enough to  prove Theorem~\ref{trpi0} for $f_{N}:=\mathbb{E}_{N}f$ instead of $f$ where $N$ is an arbitrary nonnegative integer. Indeed, since $\mathbb{E}f_{N} = \mathbb{E}f$ and $-S(f)^{2} \leq -S(f_{N})^{2}$, estimate  \eqref{trpi1} for $f_{N}$ implies that  
\begin{align*}
\mathbb{E} \exp(f_{N} - S(f)^{2}/4\alpha) \leq \exp(\mathbb{E}f)
\end{align*}
It follows that the sequence $g_{N}:=\exp(f_{N}-S(f)^{2}/4\alpha)$ is uniformly integrable, for example $g_{N} \in L^{2}$. Using uniform integrability of $g_{N}$ and convergence in measure as $N\to \infty$ we obtain 
$$
\mathbb{E} \exp(f - S(f)^{2}/4\alpha) \leq \exp(\mathbb{E}f).
$$

In what follows we assume that $f=f_{N}$. In this case we have 
\begin{align*}
&f_{N} = f_{0} + \sum_{k=0}^{N-1} (f_{k+1}-f_{k});\\
&S^{2}(f_{N})  = \sum_{k=0}^{N-1} (f_{k+1}-f_{k})^{2}. 
\end{align*}
We will set $S(f_{0})=0$.

Define 
\begin{align*}
U_{\alpha}(x,y)=e^{x-\frac{y^{2}}{4\alpha}} \quad \text{for all} \quad (x,y) \in \mathbb{R}\times\mathbb{R}_{+}. 
\end{align*}
It is enough to show that  
\begin{align}\label{iteracia}
\mathbb{E} U_{\alpha}(f_{N},S(f_{N})) \leq \mathbb{E} U_{\alpha}(f_{N-1},S(f_{N-1}))
\end{align}
Indeed, iterating the inequality \eqref{iteracia} we will obtain 
\begin{align*}
\mathbb{E} U_{\alpha}(f_{N},S(f_{N})) \leq \ldots \leq \mathbb{E} U_{\alpha}(f_{0}, S(f_{0})) = \mathbb{E} U_{\alpha}(\mathbb{E}f, 0) = \exp(\mathbb{E}f)
\end{align*}
which proves the theorem. 

To prove \eqref{iteracia} we use the identity $\mathbb{E}\,  \bE_{N-1}= \mathbb{E}$, so we only need to show that
$$
\bE_{N-1}U_{\alpha}(f_{N}, S(f_{N})) \leq U_{\alpha}(f_{N-1},S(f_{N-1})).
$$
The latter estimate simplifies as follows:  for each $Q \in \mathcal{D}_{N-1}$ we have 
\begin{align*}
\frac{1}{|Q|}\int_{Q} U_{\alpha}(f_{N}, S(f_{N})) \leq U_{\alpha}(f_{N-1}(x),S(f_{N-1})(x)) \quad \text{for all} \quad  x \in Q. 
\end{align*}
Since 
\begin{align*}
U_{\alpha}(f_{N}, S(f_{N})) =  U_{\alpha}\left(f_{N-1}+(f_{N}-f_{N-1}), \sqrt{S^{2}(f_{N-1}) + (f_{N}-f_{N-1})^{2}}\right)
\end{align*}
we see that $f_{N-1}$ and $S(f_{N-1})$ are   constant  on $Q$. The difference $f_{N}-f_{N-1}$ takes values $c_{j}:=\langle f\rangle_{Q_{j}} - \langle f \rangle_{Q}$ on each  $Q_{j} \in \mathrm{ch}\, Q$,  $j=1,\ldots, \#(\mathrm{ch}\, Q)$, where $\#(\mathrm{ch}\, Q)$ denotes number of children in $Q$. If we set $p_{j}:=\frac{|Q_{j}|}{|Q|}\geq \alpha$ then it follows that $\sum_{j} p_{j} c_{j}=0$.

Thus to prove the theorem we only need to show that for any family of points $\{ c_{j}\}  \subset \mathbb{R}$,   positive numbers $\{p_{j}\} \subset \mathbb{R}_{+}$ with $p_{j} \geq \alpha$,  $\sum_{j} p_{j}=1$, and $\sum_{j} p_{j} c_{j}=0$  we have 
\begin{align}\label{heat}
\sum_{j} p_{j} U_{\alpha}(x+c_{j}, \sqrt{y^{2}+c_{j}^{2}})  \leq U_{\alpha}(x,y)
\end{align}
for all $(x,y) \in \mathbb{R}\times \mathbb{R}_{+}$.


Inequality \eqref{heat} 
follows from the lemma below. 
\begin{lemma}
For all $p_{j}\geq \alpha \in (0,1/2]$ with $\sum_{j} p_{j}=1$, and any $c_{j} \in \mathbb{R}$ with $\sum_{j} p_{j} c_{j}=0$ we have 
\begin{align*}
\sum p_{j} e^{c_{j} - \frac{c_{j}^{2}}{4\alpha}}\leq 1.
\end{align*}
\end{lemma}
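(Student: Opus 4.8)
The plan is to reduce the inequality to a one-variable statement by exploiting the constraint $\sum_j p_j c_j = 0$ together with $p_j \ge \alpha$. The key observation is that for each fixed $j$ one has the pointwise bound $e^{c_j - c_j^2/(4\alpha)} \le 1 + c_j$, because this is equivalent to $e^{t - t^2/(4\alpha)} \le 1+t$ for all real $t$, and in turn (taking logarithms when $1+t>0$, and noting the left side is positive while the right side is $\le 0$ when $1+t\le 0$) to $t - t^2/(4\alpha) \le \ln(1+t)$. Since $\ln(1+t) \ge t - t^2/2 \ge t - t^2/(4\alpha)$ whenever $t \ge -1$ and $\alpha \le 1/2$... this last step needs care, because $\ln(1+t) \ge t - t^2/2$ only holds for $t \ge 0$, not all $t > -1$. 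So a naive termwise bound is too crude. Instead I would aim for a sharper scalar estimate valid on the range actually forced by the constraints.

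The genuinely useful restriction is this: since $p_j \ge \alpha$ and the weights sum to $1$, and $\sum_j p_j c_j = 0$, each individual $c_j$ cannot be too negative relative to the spread — more precisely, if we isolate one index $i$, then $p_i c_i = -\sum_{j\ne i} p_j c_j$, so $\alpha |c_i| \le p_i|c_i| = |\sum_{j\ne i}p_jc_j| \le \max_j|c_j|$, which alone is not enough. The right framework is a convexity/majorization argument: I would fix all the $c_j$ and treat $\sum_j p_j e^{c_j - c_j^2/(4\alpha)}$ as a function on the simplex $\{p_j \ge \alpha, \sum p_j = 1\}$ under the additional linear constraint $\sum p_j c_j = 0$. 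The key step is to show the two-point case first: for $p_1 + p_2 = 1$ with $p_1,p_2 \ge \alpha$ and $p_1 c_1 + p_2 c_2 = 0$, verify $p_1 e^{c_1 - c_1^2/(4\alpha)} + p_2 e^{c_2 - c_2^2/(4\alpha)} \le 1$. Writing $c_1 = p_2 c$, $c_2 = -p_1 c$ for a free parameter $c$, this becomes a single-variable inequality $g(c) \le 1$; one checks $g(0) = 1$, $g'(0) = 0$, and $g''(c) \le 0$ on the relevant range using $p_i \ge \alpha$, i.e. that the function $\phi(t) = e^{t - t^2/(4\alpha)}$ is concave on the interval where $t$ ranges (since $\phi''(t) = \phi(t)[(1 - t/(2\alpha))^2 - 1/(2\alpha)]$, and $(1-t/(2\alpha))^2 \le 1/(2\alpha)$ for $t$ in a neighborhood of $0$ of half-width $2\alpha - \sqrt{2\alpha}\cdot(\text{something})$...). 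The point is that $\phi$ is concave precisely on $[2\alpha - \sqrt{2\alpha}, 2\alpha + \sqrt{2\alpha}]$, an interval of length $2\sqrt{2\alpha}$ containing $0$ since $\alpha \le 1/2$.

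For the general $n$-point case I would then use that the set of admissible $(p_j, c_j)$ is controlled by its extreme points: freezing the $c_j$, the extremals of the linear functional $\sum p_j \phi(c_j)$ over the polytope $\{p_j \ge \alpha, \sum p_j = 1, \sum p_j c_j = 0\}$ are supported on at most three values of $c_j$ (two constraints plus nonnegativity pinning down all but finitely many coordinates at the lower bound $\alpha$), and then a direct reduction or a second application of concavity of $\phi$ handles three atoms. Alternatively — and this is probably cleanest — I would note $\phi(c) \le 1 + c + \psi(c)$ where $\psi$ absorbs the error, and choose the comparison function so that $\sum p_j \psi(c_j) \le 0$ follows from concavity of $\psi$ at the constrained minimum. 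Concretely: it suffices to find a concave function $\psi$ with $\psi(0) = 0$ such that $\phi(t) \le 1 + t + \psi(t)$ pointwise; then by Jensen $\sum p_j \psi(c_j) \le \psi(\sum p_j c_j) = \psi(0) = 0$, while $\sum p_j(1+c_j) = 1$, giving the claim. The hard part will be exhibiting such a $\psi$: one needs $\psi$ concave, $\psi(0)=0$, and $\psi(t) \ge \phi(t) - 1 - t = e^{t - t^2/(4\alpha)} - 1 - t$, and the right-hand side here is itself not concave globally, so one must check that its concave envelope still lies below a line through the origin — equivalently that $e^{t-t^2/(4\alpha)} - 1 - t \le 0$ exactly when... it is in fact $\le 0$ for all $t$ when $\alpha \le 1/2$? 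No: at $t$ slightly negative, $e^{t-t^2/(4\alpha)} - 1 - t$ behaves like $(t^2/2)(1 - 1/(2\alpha)) > 0$ for $\alpha < 1/2$. So $\psi$ must be genuinely nonzero; I expect the correct choice is $\psi(t) = -\kappa t^2$ on a window and linear outside, tuned so that concavity and the pointwise bound coexist, with $\kappa$ and the window endpoints depending on $\alpha$. Verifying that these can be chosen consistently — i.e. that the forced concavity of $\phi$ near $0$ (length-$2\sqrt{2\alpha}$ concavity interval, which shrinks slower than the window needed) is wide enough — is the crux of the argument and the step I would spend the most effort on.
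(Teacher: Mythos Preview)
Your preferred route --- finding a concave $\psi$ with $\psi(0)=0$ and $\phi(t):=e^{t-t^2/(4\alpha)}\le 1+t+\psi(t)$ for all $t$, then applying Jensen --- cannot work, and the obstruction is structural rather than technical. The Jensen step $\sum_j p_j\psi(c_j)\le\psi(0)=0$ uses only $\sum_j p_jc_j=0$; it never touches the hypothesis $p_j\ge\alpha$. But the lemma is \emph{false} without that hypothesis: take $\alpha=1/2$, $p_1=\varepsilon$, $p_2=1-\varepsilon$, $c_1=1$, $c_2=-\varepsilon/(1-\varepsilon)$; then $\sum p_jc_j=0$ while $\sum p_j\phi(c_j)=\varepsilon e^{1/2}+(1-\varepsilon)\phi(c_2)\to 1$ from above as $\varepsilon\to0$ (since $\phi(c_2)=1+O(\varepsilon^2)$ and $e^{1/2}>1$). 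Equivalently, no affine function $L$ with $L(0)=0$ dominates $g(t):=\phi(t)-1-t$ on all of $\mathbb R$: any such $L(t)=at$ must have $a\le -1$ (from $t\to-\infty$) and $a\ge -1$ (from $t\to+\infty$), forcing $a=-1$, but then $g(t)\le -t$ means $\phi(t)\le 1$, i.e.\ $t\le t^2/(4\alpha)$, which fails for small $t>0$. Hence the concave envelope of $g$ is strictly positive at $0$, and no $\psi$ of the form you seek exists.

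The paper's argument exploits $p_j\ge\alpha$ in an essential way and proceeds by moving the \emph{points} $c_j$ rather than the weights. Using that $\phi$ is concave on $[2\alpha-\sqrt{2\alpha},\,2\alpha+\sqrt{2\alpha}]$ and convex outside, one merges all $c_j$ in the concave interval into a single point, spreads points in each convex half-line until only one remains there, and then uses the symmetry of $\phi$ about $2\alpha$ to push the configuration down to two points with one weight equal to exactly $\alpha$. That last two-point case is \emph{not} handled by the concavity you sketch (your $g''(c)\le 0$ argument only covers $c_j$ inside the concavity window, which the reduction does not guarantee); it requires a separate, somewhat delicate one-variable inequality (Proposition~\ref{dzili} in the paper) proved by direct calculus. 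Your extremal-point idea for the $p$-simplex also does not reduce the number of distinct $c_j$: at a vertex of $\{p_j\ge\alpha,\ \sum p_j=1,\ \sum p_jc_j=0\}$ many coordinates sit at $p_j=\alpha$, but all the $c_j$ are still present.
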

\begin{proof}
Consider $f(c):=e^{c-\frac{c^{2}}{4\alpha}}$. Notice that 
\begin{align*}
f''(c) = \frac{f(c)}{4\alpha^{2}}\left((c-2\alpha)^{2}-2\alpha\right).
\end{align*}
Therefore, $f(c)$ is concave on $I_{\alpha}:=[2\alpha-\sqrt{2\alpha}, 2\alpha+\sqrt{2\alpha}]$, and convex on $I_{\alpha}^{\pm}$ where $I_{\alpha}^{+} = (2\alpha+\sqrt{2\alpha}, \infty)$ and $I_{\alpha}^{-}=(-\infty, 2\alpha-\sqrt{2\alpha})$. Consider our family of points $\{c_{j}\}\subset \mathbb{R}$. Without loss of generality we can assume that if there are points from $\{c_{j}\}$ which belong to $I_{\alpha}$  then they are equal to each other. Indeed, suppose there are points $c_{1}, c_{2} \in I_{\alpha}$ such that $c_{1}\neq c_{2}$. Consider new pairs $c_{1}^{*}=c_{2}^{*}=\frac{p_{1}c_{1}+p_{2}c_{2}}{p_{1}+p_{2}}$. Clearly $c^{*}_{1}, c_{2}^{*}\in I_{\alpha}$, and by concavity we have 
\begin{align*}
p_{1}f(c_{1})+p_{2}f(c_{2})\leq p_{1}f(c_{1}^{*})+p_{2}f(c_{2}^{*}). 
\end{align*} 
Since $p_{1}c_{1}+p_{2}c_{2}=p_{1}c_{1}^{*}+p_{2}c_{2}^{*}$ we see that by replacing the points $c_{1}, c_{1}$ with $c_{1}^{*}, c_{2}^{*}$ we only increase the value of $p_{1}f(c_{1})+p_{2}f(c_{2})$. Thus all points from $I_{\alpha}$ should coincide. In other words we can assume that we have only one point in $I_{\alpha}$ with big weight $p_{m_{1}}+\ldots+p_{m_{2}}$. 

Next, consider those points $c_{j}$ which belong to $I_{\alpha}^{+}$. We claim that if there are such points then we can assume that  there is only one such point.   Indeed, let $c_{1}, c_{2} \in I_{\alpha}^{+}$ be such that $c_{1}<c_{2}$. Pick any number $t>0$ such that $c_{1}-t \in I_{\alpha}^{+}$. Consider $c_{1}^{*} = c_{1}-t$ and $c_{2}^{*} = c_{2}+tp_{1}/p_{2}$. Clearly $c_{1}^{*}<c_{1}<c_{2}<c_{2}^{*}$, and $p_{1}c_{1}+p_{2}c_{2}=p_{1}c_{1}^{*}+p_{2}c_{2}^{*}$. It follows from convexity that 
\begin{align*}
p_{1}f(c_{1})+p_{2}f(c_{2})\leq p_{1}f(c_{1}^{*})+p_{2}f(c_{2}^{*}).
\end{align*}
Thus moving points $c_{1}, c_{2}$ apart from each other we only increase the value $p_{1}f(c_{1})+p_{2}f(c_{2})$. Therefore at some moment $c_{1}=2\alpha+\sqrt{2\alpha} \in I_{\alpha}$.

In a similar way we can assume that if there are points in $I_{\alpha}^{-}$ then there is only one such point.

Next, suppose we have a point $c_{1}$ which belongs to $I_{\alpha}^{-}$ and $c_{m} \in I_{\alpha}^{+}$. We claim that by moving $c_{1}$ and $c_{m}$ close to each other and keeping the quantity $p_{1}c_{1}+p_{m}c_{m}$ the same we will only increase the value $p_{1}f(c_{1})+p_{m}f(c_{m})$. Indeed, the claim follows from the observation that the function $f(c)$ is symmetric with respect to the point $c=2\alpha$, and $f(c)$ is decreasing for $c\geq 2\alpha$.  Thus, by moving points $c_{1}, c_{m}$ close to each other we will reach the position when one of the points $c_{1}, c_{m}$ belong to $I_{\alpha}$. 

Finally to prove the lemma we need to consider only 3 cases: 1) when we have only one point  $c_{1}$ which belongs to  $I_{\alpha}$; 2) When we have two points  $c_{1}, c_{2}$  with  $c_{1}\in I_{\alpha}$, and $c_{2}\in I_{\alpha}^{+}$;  3) When we have $c_{1}, c_{2}$ with $c_{1}\in I_{\alpha}$ and $c_{2} \in I_{\alpha}^{-}$.  

In the first case the condition $\sum_{j} p_{j}c_{j}=0$ implies that $c_{1}=0$, and therefore the lemma is trivial. 

In the second case since $p_{1}c_{1}+p_{2}c_{2}=0$ we can assume that $c_{1}<0$ and $c_{2} >2\alpha+\sqrt{2\alpha}$. Moving points $c_{1}, c_{2}$ close to each other and keeping the equality $p_{1}c_{1}+p_{2}c_{2}=0$ we will only increase the value $p_{1}f(c_{1})+p_{2}f(c_{2})$.  Using this procedure we can reach the position when $c_{2}\in I_{\alpha}$ which reduces to the first case. 

In the third case we  need to prove that if $p_{1}+p_{2}=1$, $p_{1}, p_{2}\geq 0$,  $c_{1}p_{1}+c_{2}p_{2}=0$, $p_{1}, p_{2}\geq \alpha$, $\alpha\leq 1/2$, $c_{2}\leq 2\alpha-\sqrt{2\alpha}$, $c_{1}\in I_{\alpha}$ we have 
\begin{align*}
p_{1}e^{c_{1}-\frac{c_{1}^{2}}{4\alpha}}+p_{2}e^{c_{2}-\frac{c_{2}^{2}}{4\alpha}}\leq 1. 
\end{align*}
We can also assume that $p_{2}=\alpha$. Indeed, if $p_{2}>\alpha$ then we can decompose $p_{2}=\alpha+p^{+}$, and think that we have two points coinciding at $c_{2}$ with weights $\alpha$ and $p^{+}$. Then repeating the previous discussion, i.e., moving them apart from each other, we will arrive to the conclusion that $p_{2}=\alpha$.

 Without loss of generality we can assume that $2\alpha \geq c_{1}\geq 0$. Indeed, $c_{1}<0$ would contradict to the assumption $p_{1}c_{1}+p_{2}c_{2}=0$. If $c_{1}$ were greater than $2\alpha$ then we would move points $c_{2}, c_{1}$ close to each other keeping the condition $p_{1}c_{1}+p_{2}c_{2}=0$ and increasing the value $p_{1}f(c_{1})+p_{2}f(c_{2})$ (we remind that $f$ is decreasing on $(2\alpha, \infty)$ and increasing on $(-\infty, 2\alpha)$).  
By making change of variables $a_{j}:=1-\frac{c_{j}}{2\alpha}$ we need to show that 
\begin{align}\label{mokvda}
(1-\alpha)e^{(1-a_{1}^{2})\alpha}+\alpha e^{(1-a_{2}^{2})\alpha}\leq 1
\end{align}
provided that $(1-\alpha)a_{1}+\alpha a_{2}=1$, and $0\leq a_{1}\leq1,$  $\frac{1}{\sqrt{2\alpha}}\leq a_{2}$ (the assumption $a_{1}\leq 1$ comes from the fact that  $(1-\alpha)a_{1}+\alpha a_{2}=1$). In this case \eqref{mokvda} follows from the following Proposition~\ref{dzili} where we set $n:=1/\alpha \geq 2$, $a:=a_{1}$ and $a_{2} =  n-(n-1)a$. 

Thus, the lemma, and so Theorem \ref{trpi0} are proved modulo Proposition \ref{dzili} below. 
\end{proof}

 \begin{proposition}\label{dzili}
 For any $n\geq 2$ we have
  \begin{align*}
 \max_{a \in [0,1]} \; (n-1)e^{\frac{1-a^{2}}{n}} + e^{\frac{1-(n(1-a)+a)^{2}}{n}} \leq n.
 \end{align*}
  \end{proposition}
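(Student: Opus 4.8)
The plan is to maximize
\[
F(a):=(n-1)e^{(1-a^{2})/n}+e^{(1-b^{2})/n},\qquad b=b(a):=n(1-a)+a=n-(n-1)a,
\]
over $a\in[0,1]$, where $b$ decreases from $n$ to $1$. Since $b(1)=1$ we have $F(1)=n$, so it is enough to prove $F(a)\le n$ throughout $[0,1]$; and since $F$ is continuous on $[0,1]$ and smooth on $(0,1)$, its maximum is attained at an endpoint or at an interior critical point, which I would treat separately.

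For the endpoints: $F(1)=n$, and $F(0)=(n-1)e^{1/n}+e^{1/n-n}$. Here I would use the elementary bound $e^{x}\le(1-x^{2}/2)/(1-x)$ on $[0,1)$, which follows from $1-(1-x)e^{x}=\int_{0}^{x}te^{t}\,dt\ge x^{2}/2$: taking $x=1/n$ gives $(n-1)e^{1/n}\le n-\tfrac1{2n}$, while $e^{1/n-n}\le\tfrac1{2n}$ reduces to $n-\tfrac1n\ge\ln(2n)$, which holds for all real $n\ge2$ because $n-\tfrac1n-\ln(2n)$ is increasing in $n$ and positive at $n=2$. Adding, $F(0)\le n$.

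For interior critical points: differentiating, $F'(a)=\tfrac{2(n-1)}{n}\bigl(g(b)-g(a)\bigr)$ with $g(t):=t\,e^{(1-t^{2})/n}$, so at a critical point $a\in(0,1)$ one has $g(a)=g(b)=:m>0$, hence $e^{(1-a^{2})/n}=m/a$ and $e^{(1-b^{2})/n}=m/b$, and therefore $F(a)=(n-1)\tfrac ma+\tfrac mb=m\,\tfrac{(n-1)b+a}{ab}$. Using $(n-1)a+b=n$ one simplifies $(n-1)b+a=n\bigl[(n-1)-(n-2)a\bigr]$ and $m=a\,e^{(1-a^{2})/n}$, so that $F(a)\le n$ becomes
\[
e^{(1-a^{2})/n}\bigl[(n-1)-(n-2)a\bigr]\le n-(n-1)a .
\]
Both brackets are $\ge1$ for $a\in[0,1]$, so, after taking logarithms and writing $r:=1-a$ (so $1-a^{2}=r(2-r)$, $n-(n-1)a=1+(n-1)r$, $(n-1)-(n-2)a=1+(n-2)r$), this is equivalent to
\[
\frac{r(2-r)}{n}\le\ln\bigl(1+(n-1)r\bigr)-\ln\bigl(1+(n-2)r\bigr)=\int_{n-2}^{n-1}\frac{r\,dk}{1+kr}\qquad(r\in[0,1]).
\]

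This last inequality is where the actual work lies, and the key observation I would use is that $k\mapsto r/(1+kr)$ is convex on $[n-2,n-1]$, so the integral (over an interval of length $1$) is at least the value of the integrand at the midpoint $k=n-\tfrac32$, i.e. $\int_{n-2}^{n-1}\tfrac{r\,dk}{1+kr}\ge\tfrac{r}{1+(n-3/2)r}$. It then suffices to verify $\tfrac{r}{1+(n-3/2)r}\ge\tfrac{r(2-r)}{n}$ for $r\in[0,1]$, which after clearing denominators is the quadratic inequality $(2n-3)r^{2}-(4n-8)r+(2n-4)\ge0$; this holds for every real $r$ since the leading coefficient $2n-3>0$ and the discriminant equals $-8(n-2)\le0$ for $n\ge2$. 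Combining this with the endpoint bounds gives $\max_{a\in[0,1]}F(a)=n$, the assertion of the Proposition. The main obstacle is precisely this last step: crude bounds on $e^{(1-a^{2})/n}$, or replacing the integrand by an endpoint value, are too lossy (they produce a cubic inequality that is in fact false), so it is the convexity/midpoint estimate that makes the argument go through.
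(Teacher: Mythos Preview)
Your proof is correct, and its overall architecture coincides with the paper's: check the endpoints $a=0,1$ separately, and at an interior critical point use $g(a)=g(b)$ (with $g(t)=te^{(1-t^{2})/n}$) to eliminate the second exponential and reduce $F(a)\le n$ to the single-variable inequality
\[
e^{(1-a^{2})/n}\bigl[(n-1)-(n-2)a\bigr]\le n-(n-1)a,
\]
which is exactly the inequality $1-\dfrac{1-a_0}{a_0+n(1-a_0)}\le e^{(a_0^{2}-1)/n}$ appearing in the paper.

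The genuine difference is in how this key inequality is dispatched. The paper replaces $e^{-x}$ by its cubic Taylor polynomial $1-x+x^{2}/2-x^{3}/6$, clears denominators, and is left with a cubic-in-$w$, cubic-in-$n$ polynomial whose positivity is then argued via convexity in $n$ and a tangent-line bound at $n=2$. You instead rewrite the inequality (after the substitution $r=1-a$) as
\[
\frac{r(2-r)}{n}\le \int_{n-2}^{n-1}\frac{r\,dk}{1+kr},
\]
observe that the integrand is convex in $k$, and apply the midpoint (Hermite--Hadamard) bound to reduce everything to a single quadratic in $r$ with discriminant $-8(n-2)\le 0$. This is cleaner and shorter: no third-order Taylor remainder, no two-variable polynomial analysis. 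Your handling of the endpoint $a=0$ via $(1-x)e^{x}\le 1-x^{2}/2$ and the monotonicity of $n-1/n-\ln(2n)$ is also a bit more direct than the paper's cubic Taylor estimate. The paper's approach, on the other hand, is more ``bare hands'' and does not invoke any convexity/integral identity, which some readers may prefer; but your route certainly buys brevity and transparency.
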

 \begin{proof}

 Consider 
 \begin{align*}
 f(a) := (n-1)e^{\frac{1-a^{2}}{n}} + e^{\frac{1-(n(1-a)+a)^{2}}{n}} -n \quad \text{for} \quad a \in [0,1].
 \end{align*}
 Notice that 
 \begin{align*}
 f'(a) = 2(n-1) \left(e^{\frac{1-(n(1-a)+a)^{2}}{n}}(n(1-a)+a) -a e^{\frac{1-a^{2}}{n}} \right).
 \end{align*}
 Therefore $f(1)=f'(1)=0$ and $f''(1)=-\frac{2(n-1)(n-2)}{n} <0$. Also notice that 
 \begin{align}\label{c5}
 f(0) = (n-1)e^{1/n}+e^{(1-n^{2})/n}-n <0.
 \end{align}
Indeed, we rewrite  \eqref{c5} as $ 1-\frac{1}{n}+\frac{e^{-n}}{n}<e^{-1/n}$, and we use estimate $e^{-1/n}\geq 1-1/n+1/2n^{2}-1/6n^{3}$. Then our claimed inequality would follow from $(6e^{-n}n^{2}-3n+1)/(6n^{3})<0$ which is true because $6e^{-n}n^{2}\leq 24e^{-2}<4\leq 3n-1$ for $n\geq 2$ (notice that $e^{2} > 6$).

Thus  the  only interesting case is what happens with $f(a_{0})$ at critical points $a_{0}$, i.e., $f'(a_{0})=0$. Consider $f'(a_{0})=0$. The equation can be simplified as follows 
 \begin{align*}
 e^{\frac{1-(n(1-a_{0})+a_{0})^{2}}{n}}=\frac{a_{0}}{n(1-a_{0})+a_{0}} e^{\frac{1-a_{0}^{2}}{n}}.
 \end{align*}
 Substituting into the expression for $f(a)$ we see that it would be sufficient to show 
 \begin{align*}
 (n-1)e^{\frac{1-a_{0}^{2}}{n}} + \frac{a_{0}}{n(1-a_{0})+a_{0}} e^{\frac{1-a_{0}^{2}}{n}}  -n \leq 0.
 \end{align*}
 Simplifying further it is enough to show the following
 \begin{align*}
 1 - \frac{1-a_{0}}{a_{0}+n(1-a_{0})} \leq e^{\frac{a_{0}^{2}-1}{n}}.
 \end{align*}
 We will show that this inequality holds in fact for all $a_{0} \in [0,1]$. 
 We estimate the right hand side by $e^{-x}\geq 1-x+x^{2}/2-x^{3}/6$. Therefore it would suffice to show that 
 \begin{align*}
 0\leq \frac{1-a_{0}}{a_{0}+n(1-a_{0})} + \frac{a_{0}^{2}-1}{n} + \frac{(a_{0}^{2}-1)^{2}}{2n^{2}}+\frac{(a^{2}_{0}-1)^{3}}{6n^{3}}.
 \end{align*}
 If we denote $w=1-a^{2} \in [0,1]$ the latter inequality simplifies as follows 
 \begin{align*}
 -\frac{w^{2}}{6n^{3}} &+ \frac{w}{2n^{2}}  - \frac{1}{n} + \frac{1}{nw+1-w+\sqrt{1-w}} \\
 &\ge-\frac{w^{2}}{6n^{3}} + \frac{w}{2n^{2}}  - \frac{1}{n} + \frac{1}{nw+1-w+1-w/2}  \\
 &=\frac{12(1-w)n^{3}+6(w-1)(w+4)n^{2}+(-2w^{3}-9w^{2}+12w)n+3w^{3}-4w^{2}}{6n^{3}(w(2n-3)+4)}
 \end{align*}
 The second derivative in $n$ of the numerator of the last expression is $12(1-w)(6n-4-w)$ which is positive for $n\geq 2$. So the numerator is convex in $n$, and we will estimate from below by its tangent line at point $n=2$. The tangent line has the expression 
 \begin{align*}
 w^{2}(2-w)+(-2w^{3}+15w^{2}-60w+48) (n-2)
 \end{align*}
 The first term is positive since $w \in [0,1]$. The coefficient in front of $(n-2)$ is decreasing in $w$, and attains its minimal value when $w=1$ which is positive.  
 \end{proof}
 
 


\subsection{Proof of Theorem \ref{t:genCWWmax}}
\label{s:ProofOf_genCWWmax}
Proof of this theorem is well-known and simple: we present it here only for the readers' convenience. 

Given $\lambda>0$ let $\cR=\cR_\lambda \subset\cD$ be the collection of maximal cubes such that 
\begin{align*}
|\langle f\rangle\ci Q | >\lambda . 
\end{align*}
Define function $\tilde f $ by replacing $f$ on  cubes $R\in\cR$ by $\langle f\rangle\ci R$, i.e.~as 
\begin{align*}
\tilde f = f -\sum_{R\in\cR} (f - \langle f \rangle\ci R )\1\ci R.
\end{align*}
It is easy to see that 
\begin{align*}
\{x\in\cX :     f^*(x)>\lambda  \} = \{x\in\cX :     |\tilde f(x) | >\lambda  \} =\bigcup_{R\in\cR} R,  
\end{align*}
and that $S\tilde f(x) \le Sf(x)$ a.e. Applying Theorem \ref{t:genCWW} to $f$ and $-f$ we get \eqref{distr_of_psi^*}. \hfill\qed

\section{Weighted estimates}

\subsection{Unweighted and weighted good \texorpdfstring{$\lambda$}{lambda} inequalities}
The following lemma was proved in \cite[Corollary 3.1]{ChWiWo1985} (with $1/2$ instead of $\alpha^3/(1-2\alpha+2\alpha^2)$) for the square function $S_\infty$. The proof for the classical square function $S$ requires some modifications, so for the reader's convenience we present it here. 
\begin{lemma}
	\label{l:Good_la-01}
	For an $\alpha$-homogeneous filtration and $f\in L^1$, and $\lambda>0$,  let $\cQ=\cQ(\lambda)$ be the collection of maximal cubes $Q\in\cD$ such that $|\langle f \rangle\ci Q |>\lambda$. 
	
	Then for each $Q\in\cQ$  we have the following good $\lambda$ inequality
\begin{align}
\label{goog_la-01}
|\{x\in Q:  f^*(x)  >2\lambda,  Sf(x) & \le \e\lambda \}| 
\\  \notag
& \le 2 \exp\left(- \frac{\alpha^3 (1-\e)^2}{ (1-2\alpha+2\alpha^2)\e^2}    \right) |Q|
\end{align}
\end{lemma}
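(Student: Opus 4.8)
The strategy follows the classical Chang--Wilson--Wolff good-$\lambda$ argument, but run entirely \emph{inside} a fixed maximal cube $Q\in\cQ(\lambda)$ so that we can exploit the a priori smallness of the averages there. Fix $Q\in\cQ$. Since $Q$ is maximal with $|\langle f\rangle\ci Q|>\lambda$, its parent $\widehat Q$ satisfies $|\langle f\rangle\ci{\widehat Q}|\le\lambda$, and by the $\alpha$-homogeneity $|\langle f\rangle\ci Q|\le \alpha^{-1}|\langle f\rangle\ci{\widehat Q}|\le\lambda/\alpha$. The first step is to localize: work with the martingale $f$ restricted to $Q$, i.e.\ with the stopped/truncated function $g:=(f-\langle f\rangle\ci Q)\1\ci Q$, whose own square function satisfies $Sg\le Sf$ on $Q$ and $Sg=0$ off $Q$, and whose maximal function (relative to the subtree rooted at $Q$) controls the event $\{f^*>2\lambda\}\cap Q$ up to the contribution of $\langle f\rangle\ci Q$.

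\textbf{Key steps.} (i) On $Q$, split $f^* \le |\langle f\rangle\ci Q| + g^*$ where $g^*$ is the maximal function of $g$ within the subtree of $Q$; hence $\{x\in Q: f^*(x)>2\lambda\}\subseteq \{x\in Q: g^*(x) > 2\lambda - \lambda/\alpha\}$. Here is where the homogeneity constant $\alpha$ enters the exponent. (Some care with signs: one applies this to $f$ and to $-f$, or simply notes $2\lambda - |\langle f\rangle\ci Q| \ge 2\lambda - \lambda/\alpha$; when $\alpha$ is close to $1/2$ this is positive, and the precise arithmetic producing $\alpha^3(1-\e)^2/((1-2\alpha+2\alpha^2)\e^2)$ is a routine optimization I will not grind through.) (ii) On the set where additionally $Sf\le\e\lambda$, we also have $Sg\le\e\lambda$, so $\|Sg\ \1\ci{Q'}\|_\infty$ is small on the relevant truncated pieces. (iii) Now apply the distribution inequality of Theorem~\ref{t:genCWWmax} to the function $g$ \emph{on the probability space $Q$ equipped with normalized measure $|Q|^{-1}\sigma|\ci Q$ and the induced $\alpha$-homogeneous filtration}. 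That theorem gives, with $\E\ci Q g=0$,
\[
\frac{|\{x\in Q: g^*(x)>\mu\}|}{|Q|}\ \le\ 2\exp\!\left(-\frac{\alpha\,\mu^2}{\|Sg\|\ci{L^\infty(Q)}^2}\right).
\]
(iv) To deploy this we must first remove the part of $Q$ on which $Sg$ is large: let $\cE$ be the maximal subcubes $Q'\subset Q$ with $\langle (Sg)^2\rangle\ci{Q'}$ (or a pointwise version) exceeding $(\e\lambda)^2$; on $\bigcup\cE$ the hypothesis $Sf\le\e\lambda$ already forces this set to be handled trivially or to have the right measure, while off $\bigcup\cE$ one replaces $g$ by its stopped version $\tilde g$ at the stopping time defined by $\cE$, so that $\|S\tilde g\|_\infty\lesssim \e\lambda$ with a constant governed by $\alpha$ (a stopped martingale difference can overshoot by the ratio coming from $\alpha$-homogeneity, which is the source of the $\alpha^3$ versus $\alpha$ discrepancy and of the factor $1-2\alpha+2\alpha^2$). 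Apply (iii) to $\tilde g$ with $\mu\asymp(1-\e)\lambda$ and combine.

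\textbf{Main obstacle.} The genuinely delicate point — and the reason the lemma is stated for $S$ rather than $S_\infty$ with a worse constant than the classical $1/2$ — is step (iv): controlling $\|S\tilde g\|_\infty$ after stopping. For $S_\infty$ the stopped square function is bounded by the stopping threshold plus a single martingale-difference term whose $L^\infty$ norm is itself at most the threshold, giving a clean constant. For the classical $L^2$-based square function $S$ the last increment $\Delta\ci{Q'}\tilde g$ at a stopped cube $Q'$ need not be bounded pointwise by $\e\lambda$; one only controls $\langle|\Delta\ci{Q'}\tilde g|^2\rangle\ci{Q'}$, and converting this into a pointwise bound on $S\tilde g$ costs a factor $\alpha^{-1/2}$ per use of homogeneity (since a child has measure at least $\alpha|Q'|$, a difference with small average square can be as large as $\alpha^{-1/2}$ times that on a child). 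Tracking these factors carefully through the stopping construction is what produces the exponent $\alpha^3(1-\e)^2/((1-2\alpha+2\alpha^2)\e^2)$; I expect this bookkeeping, rather than any conceptual difficulty, to be the bulk of the work, and I would organize it by first proving a clean sublemma: \emph{if $\tilde g$ is $g$ stopped at the first cube where $Sg>\e\lambda$, then $S\tilde g \le C(\alpha)\,\e\lambda$ pointwise with $C(\alpha)^2 = (1-2\alpha+2\alpha^2)/\alpha^2$ or similar}, and then feeding $\mu=(1-\e)\lambda$, $\|S\tilde g\|_\infty\le C(\alpha)\e\lambda$ into Theorem~\ref{t:genCWWmax}.
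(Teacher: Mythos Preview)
Your overall architecture is right, and you correctly identify step (iv) as the heart of the matter, but there are two genuine gaps.

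\textbf{Step (i) is wrong as written.} The claim $|\langle f\rangle\ci Q|\le\alpha^{-1}|\langle f\rangle\ci{\widehat Q}|$ is false: $\alpha$-homogeneity bounds measures of children, not averages, and if $\langle f\rangle\ci{\widehat Q}=0$ the child average can be anything. Consequently your threshold $\mu=2\lambda-\lambda/\alpha$ is negative for every $\alpha<1/2$ and the containment is vacuous. The correct bound is both sharper and simpler: if $E\ci Q\ne\varnothing$, pick $x\in E\ci Q$; then $|\langle f\rangle\ci Q-\langle f\rangle\ci{\widehat Q}|=|\Delta\ci{\widehat Q}f(x)|\le Sf(x)\le\e\lambda$, so $|\langle f\rangle\ci Q|\le(1+\e)\lambda$ and one may take $\mu=(1-\e)\lambda$. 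This is where the factor $(1-\e)^2$ in the exponent comes from; it is not an optimization, it is forced.

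\textbf{The sublemma in step (iv) is false, and this is the real issue.} Simple stopping does not give $\|S\tilde g\|_\infty\le C(\alpha)\e\lambda$. Let $R\in\cR$ with parent $\widehat R$. Maximality guarantees at least one child of $\widehat R$ is \emph{not} in $\cR$, and on such children $|\Delta\ci{\widehat R}g|\le\e\lambda$. But if $\widehat R$ has \emph{several} children $R_1,\dots,R_m$ in $\cR$, the values of $\Delta\ci{\widehat R}g$ on those children can be arbitrarily large: they need only cancel among themselves so that the zero-average constraint is met by the small values on the unstopped children. So $S\tilde g$ is genuinely uncontrolled on $\cup_k R_k$. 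The paper's fix is not to stop harder but to \emph{modify} the last martingale difference: replace $\Delta\ci{\widehat R}g$ on each $R_k$ by the common value $\langle\Delta\ci{\widehat R}g\rangle\ci{\cup_k R_k}$. The result $f\ci{\widehat R}$ is still a mean-zero function on $\widehat R$, agrees with $\Delta\ci{\widehat R}g$ off the stopped children, and now $|f\ci{\widehat R}|\le\frac{1-\alpha}{\alpha}\e\lambda$ on $\cup_k R_k$ by the balance condition (the unstopped part has relative measure at least $\alpha$). Call the resulting function $f_3$; it coincides with the stopped function off $\cup_{R\in\cR}R$, hence $E\ci Q$ is still contained in $\{f_3^*>(1-\e)\lambda\}$, and
\[
\|Sf_3\|_\infty^2\ \le\ (\e\lambda)^2+\Bigl(\tfrac{1-\alpha}{\alpha}\e\lambda\Bigr)^2\ =\ (\e\lambda)^2\,\frac{1-2\alpha+2\alpha^2}{\alpha^2}.
\]
Feeding this and $\mu=(1-\e)\lambda$ into Theorem~\ref{t:genCWWmax} gives exactly the stated exponent. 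The averaging modification, not bookkeeping, is the missing idea.
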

\begin{proof}
Consider the family $\cQ$ of maximal cubes	$Q\in\cD$ such that 
\begin{align*}
|\langle f \rangle\ci Q |>\lambda. 
\end{align*}
For each $Q\in\cQ$ consider the set
\begin{align*}
E\ci Q:= \{x \in Q : Sf(x)\le \e\lambda , f^*(x)>2\lambda \}. 
\end{align*}
Note that if $E\ci Q\ne \varnothing$, then $|\langle f \rangle\ci Q | \le (1+\e)\lambda$: indeed, if $\widehat Q$ is the parent of $Q$, then by the construction $|\langle f\rangle\ci{\widehat Q}|\le \lambda$ and for $x\in E\ci Q$
\[
|\langle f\rangle\ci{\widehat Q} - \langle f\rangle\ci{ Q} | \le Sf(x) \le\e\lambda.
\]

The inequality $|\langle f \rangle\ci Q | \le (1+\e)\lambda$ implies that if we define $f_1:= (f - \langle f \rangle\ci Q ) \1\ci{Q}$, then
\begin{align*}
E\ci Q \subset E^1\ci Q := \{x\in Q:  f_1^*(x)>(1-\e)\lambda,\, Sf_1(x)\le \e\lambda  \}.
\end{align*}

Indeed, the inequality $\1_{Q}(x)Sf(x) \geq Sf_{1}(x)$ is trivial. Next,  if  $f^{*}(x)>2\lambda$ for $x \in Q$ then it  follows that there exists $\tilde{Q} \subset Q$, $\tilde{Q} \in \mathcal{D}$ with $x \in \tilde{Q}$ such that $|\langle f\rangle_{\tilde{Q}}|>2\lambda$, and therefore 
$$
f^{*}_{1}(x) \geq |\langle f\rangle_{\tilde{Q}} - \langle f \rangle_{Q}| \geq |\langle f\rangle_{\tilde{Q}}| - |\langle f \rangle_{Q}|>  (1-\varepsilon)\lambda. 
$$

Let $\cR$ be the collection of maximal cubes $R\subset Q$ such that $Sf_1(x)\ge \e\lambda$ everywhere on $R$
 The set $E\ci Q$ does not intersect cubes $R\in\cR$, so if we define $f_2$ as 
\begin{align}
\label{f_2}
f_2:= \sum_{K\in \cD(Q)\setminus \cup_{R\in\cR}\cD(R)} \Delta\ci K f, 
\end{align}
where $\cD(Q):= \{K\in \cD: K\subset Q \}$, then 
\begin{align*}
E^1\ci Q = E^2\ci Q := \{x\in Q:  f_2^*(x)>(1-\e)\lambda,\, Sf_2(x)\le \e\lambda  \}. 
 \end{align*}

Up to this moment the proof was exactly as the proof of \cite[Corollary 3.1]{ChWiWo1985}. If one uses the square function $S_\infty$ (or $S_p$) instead of $S$, on can conclude, that because the term $\|\Delta\ci K f\|_\infty^2 \1\ci K$ is constant on $K$, then by the construction $S_\infty f_2(x) \le \e\lambda$ on $Q$, and then use the corresponding superexponential estimates \cite[Theorem 3.1]{ChWiWo1985} (Theorem \ref{t:genCWWmax} in this paper). 

But for our square function we have no control on how big $Sf_2$ is on cubes $R\in\cR$! Therefore, to apply Theorem \ref{t:genCWWmax} we need first to modify the function $f_2$.

Let $R\in \cR$ and let $\widehat{R} $ be its parent. If we are lucky, and $\widehat{R}$ has only one child $R$ that belongs to $\cR$, we can estimate $|\Delta\ci{\widehat{R}}f|$ on $R$. Namely, for $x\in\widehat{R}\setminus R$
\begin{align*}
|\Delta\ci{\widehat{R}} f(x) | \le Sf_2 (x) \le \e\lambda, 
\end{align*}
and since $ \Delta\ci{\widehat{R}} f$ has zero average we can estimate that for $x\in R$
\begin{align*}
\left| \Delta_{\widehat{R}} f\right| \le \frac{1-\alpha}{\alpha} \e\lambda .
\end{align*}
But if we are not so lucky, and $\widehat R$ has more than one child that belongs to $\cR$, then we cannot estimate the value of $\Delta\ci{\widehat{R}} f$ on such children. If $R_k$ are all the children of $\widehat{R}$ that belong to $\cR$, we can definitely estimate the average, 
\begin{align*}
\left|\langle \Delta_{\widehat{R}} f \rangle\ci{\cup_k R_k} \right| \le \frac{1-\alpha}{\alpha} \e\lambda ,
\end{align*}
but because of possible cancellations, the values on $R_k$ can be arbitrarily large. 

So let us change the martingale difference $\Delta\ci{\widehat{R}} f = \Delta\ci{\widehat{R}} f_1 = \Delta\ci{\widehat{R}} f_2$ by replacing its values on the cubes $R_k$ by the average $\langle \Delta_{\widehat{R}} f \rangle\ci{\cup_k R_k}$. The resulting function will still have zero average, so it is also a martingale difference, let us call it $ f_{\widehat{R}}$. 

So, if we define the function $f_3$ by replacing in \eqref{f_2}  the martingale differences $\Delta_{\widehat{R}} f$ by $f_{\widehat{R}}$ for the parents $\widehat{R}$ of cubes $R\in\cR$, then outside of cubes $R\in\cR$ we have $f_2 = f_3$ and $Sf_2=Sf_3$; note also that $f_{\widehat{R}} = \Delta_{\widehat{R}} f_3$. Therefore, 
\begin{align*}
E\ci Q^2\subset E\ci Q^3 := \{x\in Q:  f_3^*(x)>(1-\e)\lambda,\, Sf_3(x)\le \e\lambda  \}. 
\end{align*}
But $Sf_3$ can be estimated, 
\begin{align*}
\|Sf_3(x)\|_\infty^2\le (\e\lambda)^2 + (\e\lambda)^2 \frac{(1-\alpha)^2}{\alpha^2} = (\e\lambda)^2 \frac{1-2\alpha + 2\alpha^2}{\alpha^2} , 
\end{align*}
and we can apply the superexponential distributional estimates from Theorem \ref{t:genCWWmax} to get that 
\begin{align*}
|E\ci Q|\le |E\ci Q^3| \le 2 \exp\left( - \frac{\alpha^3 (1-\e)^2\lambda^2}{ (1-2\alpha+2\alpha^2)\e^2\lambda^2} 
\right) |Q|
\end{align*}
\end{proof}

Slightly abusing notation for $E\subset \cX$ and $w\in L^1$ we will denote 
\begin{align*}
w(E):= \int_E w(x) \dd x
\end{align*}
\begin{lemma}
	\label{l:w(E)}
Suppose that for an $\alpha$-homogeneous filtration and $w\in L^1$, $w\ge 0$ we have for a cube $Q_0\in\cD$
\begin{align*}
\langle w^* \rangle\ci{Q_0} \le A \langle w \rangle\ci{Q_0}  .
\end{align*}
Let $E\subset Q_0$ be a union of some cubes in $\cD(Q_0)$. Then 
\begin{align}
\label{w(E)}
 w(E) \le 2A \frac{\ln (2/\alpha)}{\ln(|Q_0|/|E|)} w(Q_0)
\end{align}
\end{lemma}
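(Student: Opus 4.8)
## Proof proposal for Lemma~\ref{l:w(E)}

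The plan is to exploit the $A_\infty$-type hypothesis $\langle w^*\rangle\ci{Q_0}\le A\langle w\rangle\ci{Q_0}$ through the fact that the maximal function dominates, on each cube $R\in\cD(Q_0)$, the average $\langle w\rangle\ci R$. Since $E$ is a union of cubes in $\cD(Q_0)$, I would first pass to the collection $\mathcal{R}$ of \emph{maximal} cubes $R\in\cD(Q_0)$ contained in $E$, so that $E=\bigsqcup_{R\in\mathcal R}R$ is a disjoint union and $w(E)=\sum_{R\in\mathcal R}w(R)$. The key pointwise observation is that for every $R\in\mathcal R$ one has $w^*(x)\ge\langle w\rangle\ci R$ for all $x\in R$, hence $\int_R w^* \ge |R|\,\langle w\rangle\ci R = w(R)$. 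But this alone only recovers $w(E)\le \int_E w^* \le \int_{Q_0}w^* \le A\,w(Q_0)$, which is too weak — it misses the crucial gain from the ratio $|Q_0|/|E|$ being large.

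To get the logarithmic improvement, the idea is to \emph{iterate}: run a stopping-time (Calderón--Zygmund) argument inside $Q_0$ to split $E$ into pieces living at controlled ``depth''. Concretely, fix a large threshold $\tau>1$ (to be optimized) and let $\mathcal{R}_1$ be the maximal cubes $R\subset Q_0$ with $\langle w\rangle\ci R > \tau\langle w\rangle\ci{Q_0}$; on each such $R$, since $\alpha$-homogeneity forces $\langle w\rangle\ci R\le\alpha^{-1}\langle w\rangle\ci{\widehat R}\le\alpha^{-1}\tau\langle w\rangle\ci{Q_0}$ (here $\widehat R$ is the parent and $\langle w\rangle\ci{\widehat R}\le\tau\langle w\rangle\ci{Q_0}$ by maximality), so the averages jump by at most a factor $\tau/\alpha$ per generation. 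Iterating this stopping procedure $k$ times produces, after $k$ steps, a family of cubes on which the average of $w$ exceeds $\tau^k\langle w\rangle\ci{Q_0}$ but is still $\le \alpha^{-1}\tau^k\langle w\rangle\ci{Q_0}$; meanwhile the total measure of the $k$-th generation decays geometrically, $\sum_{R\in\mathcal R_k}|R|\le \tau^{-k}|Q_0|\cdot(\text{something})$, by a Chebyshev-type estimate against $\langle w^*\rangle\ci{Q_0}\le A\langle w\rangle\ci{Q_0}$. Choosing $k$ so that $\tau^{-k}$ is comparable to $|E|/|Q_0|$, i.e.\ $k\approx \ln(|Q_0|/|E|)/\ln\tau$, and picking $\tau=2/\alpha$ (so that $\ln\tau=\ln(2/\alpha)$, which is exactly where the constant in \eqref{w(E)} comes from), should deliver the bound.

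More precisely, the mechanism I would use is: write $E=E_{\mathrm{small}}\cup E_{\mathrm{large}}$, where $E_{\mathrm{small}}$ is the part of $E$ on which some ancestor up to generation $k$ has small average of $w$, and $E_{\mathrm{large}}$ the rest. On $E_{\mathrm{large}}$, every point lies in a $k$-th generation stopping cube, so $w(E_{\mathrm{large}})\le \sum_{R\in\mathcal R_k}w(R)\le \alpha^{-1}\tau^k\langle w\rangle\ci{Q_0}\sum_{R\in\mathcal R_k}|R|$; bounding the last sum by $\int_{Q_0}w^*/(\tau^k\langle w\rangle\ci{Q_0})\le A\tau^{-k}|Q_0|$ gives $w(E_{\mathrm{large}})\le \alpha^{-1}A\,w(Q_0)$ — constant, but we haven't used $|E|$ yet, so this is where the choice of $k$ relative to $|E|/|Q_0|$ must enter to make the ``small'' part controllable. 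The cleanest route is probably: since $E$ itself has small measure, only $\lceil \ln(|Q_0|/|E|)/\ln(2/\alpha)\rceil$ generations of doubling of the average can occur before the cubes would have to be too small to fit inside $E$...

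Actually, the cleanest argument uses the distributional/entropy inequality directly. Let me restate: by the hypothesis, $\int_{Q_0} w^*\le A\,w(Q_0)$. For any level set of $w^*$ we have the weak-type bound; combined with the layer-cake formula and the a priori constraint that $E$ can only contain cubes where $w^*$ is ``not too concentrated,'' one gets $w(E)=\int_E w\le \int_E w^*\le \int_0^\infty |\{x\in E: w^*(x)>t\}|\,dt$. Split the integral at $t_0=\lambda\langle w\rangle\ci{Q_0}$: for $t\le t_0$, bound $|\{\dots\}|\le |E|$; for $t> t_0$, bound $|\{x\in Q_0: w^*(x)>t\}|$ via a reverse estimate coming from the stopping cubes, which decays like $t^{-1}$ with constant $A\,w(Q_0)$, but the point is that the cubes contributing are deep, and deep cubes inside $Q_0$ have measure at most $\alpha^{\,(\text{depth})}|Q_0|$, so truncating at depth $\sim\ln(|Q_0|/|E|)/\ln(1/\alpha)$ is forced. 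Optimizing $\lambda$ then yields \eqref{w(E)} with the stated constant $2A\ln(2/\alpha)/\ln(|Q_0|/|E|)$.

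The main obstacle, I expect, is organizing the two-sided bookkeeping cleanly: one needs simultaneously (i) the geometric decay of measures of stopping cubes, which wants the $A_\infty$ hypothesis on $w^*$, and (ii) the geometric decay $|R|\le\alpha^{\mathrm{depth}(R)}|Q_0|$ from $\alpha$-homogeneity, which controls how deep a cube inside $E$ must sit — and then balancing the truncation depth against $\ln(|Q_0|/|E|)$ so that the factor $\ln(2/\alpha)$ (rather than $\ln(1/\alpha)$) emerges, accounting for the factor $2$. The ``$2$'' in $2A$ and in $2/\alpha$ presumably comes from summing a geometric series $\sum 2^{-k}$ or from a dyadic-in-$t$ decomposition of the layer-cake integral, so I would keep the free threshold parameter symbolic until the very end and then choose it to produce exactly these constants.
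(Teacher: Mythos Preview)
Your proposal never reaches a proof; more importantly, it is organized around the wrong stopping construction. You run Calder\'on--Zygmund stopping times on the weight $w$ (thresholds $\tau^k\langle w\rangle\ci{Q_0}$), and you yourself note that this leaves you with a bound independent of $|E|$ and no clear mechanism to insert $\ln(|Q_0|/|E|)$. The subsequent appeal to ``depth constraints from $\alpha$-homogeneity'' is backwards: homogeneity gives the \emph{lower} bound $|R|\ge\alpha^{\mathrm{depth}}|Q_0|$, so cubes contained in $E$ are not forced to be deep. The layer-cake variant is left as a sketch with no concrete estimate.

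The paper's proof runs the stopping construction on $\1\ci E$, not on $w$. Set $E_0:=E$ and $E_k:=\{x:(\1\ci E)^*(x)>(\alpha/2)^k\}$ for $k=1,\dots,N_\alpha:=\lfloor\log_{2/\alpha}(|Q_0|/|E|)\rfloor$. This produces a nested chain $E=E_0\subset E_1\subset\cdots\subset E_{N_\alpha}\subset Q_0$ of exactly $\approx\log_{2/\alpha}(|Q_0|/|E|)$ layers, with the sparseness property that the maximal cubes of $E_{k-1}$ occupy at most half of each maximal cube of $E_k$. Now let $f:=w\1\ci E$: for every maximal cube $Q$ of $E_k$ one has $f^*(x)\ge |Q|^{-1}\sum_{K\in\cE_0,\,K\subset Q}w(K)$ on all of $Q$, so integrating over $E_k\setminus E_{k-1}$ picks up at least $\tfrac12 w(E)$ from each annulus. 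Summing over $k$ and using $f^*\le w^*$ together with the hypothesis $\int_{Q_0}w^*\le A\,w(Q_0)$ gives $(1+N_\alpha/2)\,w(E)\le A\,w(Q_0)$, which is \eqref{w(E)}. The logarithmic gain comes from counting layers in the geometry of $E$, not from stratifying by the size of $w$; this is the idea your proposal is missing.
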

\begin{proof}
Define $E_0:=E$ and
\begin{align*}
E_k:= \{x\in \cX : (\1\ci E)^*(x)  > (\alpha/2)^k\}, \qquad k=1, 2, \ldots , \lfloor \log_{2/\alpha}(|Q_0|/|E|)   \rfloor :=N_\alpha . 
\end{align*}

Let $\cE_k$ be the collection of maximal cubes $Q\subset E_k$. It is easy to see that for any $Q\in\cE_k$, $k=1, 2, \ldots, N_\alpha$
\begin{align}
\label{sparse}
\sum_{R\in \cE_{k-1}, R\subset Q} |R| \le \frac12 |Q|. 
\end{align}

Let $f:= w \1\ci E$. Since for every cube $Q\in\cE_k$
\begin{align*}
|f^*(x)| \ge |Q|^{-1}\sum_{K\in\cE_0, \, K\subset Q} w(K), \qquad \forall x\in Q, 
\end{align*}
we conclude summing over all $Q\in\cE_k$ and using \eqref{sparse} that 
\begin{align*}
\int_{E_k\setminus E_{k-1}} f^*(x)\dd x \ge \frac12 w(E), 
\end{align*} 
and clearly $\int_{E} f^*(x)\dd x\ge w(E)$. Therefore
\begin{align*}
(1+N_\alpha/2)w(E)\le \int_{Q_0} f^* \dd x \le \int_{Q_0} w^* \dd x \le A \int_{Q_0} w \dd x =A w(Q_0).  
\end{align*}
Noticing that 
\begin{align*}
(1+N_\alpha/2)\ge (1+N_\alpha)/2 \ge \frac12 \log_{2/\alpha}(|Q_0|/|E|) = \frac{\ln(|Q_0|/|E|)}{2 \ln (2/\alpha)}
\end{align*}
concludes the proof. 
\end{proof}

\subsection{Proof of Theorem \ref{t:weighted}}
The standard good $\lambda$ inequalities reasoning implies that if 	
\begin{align}
\label{w-good_lambda}
w(\{ x\in \cX:  f^*(x)  >2\lambda,  Sf(x)\le \e\lambda  \}) \le 2^{-(p+1)} w(\{x\in\cX: f^*>\lambda \}), 
\end{align}
then 
\begin{align*}
\|f^*\|\ci{L^p(w)} \le  \e^{-1} 2^{(p+1)/p}  \| Sf \|\ci{L^p(w)}
\end{align*}

Recall that $\cQ$ is the collection of maximal cubes $Q\in\cD$ such that $|\langle f \rangle\ci{Q}|>\lambda$. For $Q\in\cQ$ denote
\begin{align*}
E\ci Q:= \{ x\in Q:  f^*(x)  >2\lambda,  Sf(x)\le \e\lambda  \}. 
\end{align*}
To prove \eqref{w-good_lambda} it is sufficient to show that 
 for all $Q\in\cQ$ 
\begin{align}
\label{w-good_lambda_Q}
w(\{ x\in Q:  f^*(x)  >2\lambda,  Sf(x)\le \e\lambda  \}) \le 2^{-(p+1)} w(Q) ;
\end{align}
taking the sum over all $Q\in\cQ$ give us \eqref{w-good_lambda}. 

By Lemma \ref{l:w(E)} the above inequality \eqref{w-good_lambda_Q}  holds if 
\begin{align}
\label{eps-A-alpha-01}
2^{-(p+1)} \ge 2A \frac{\ln (2/\alpha)}{\ln(|Q|/|E\ci Q|)}\,,
\end{align}
and estimating $|Q|/|E\ci Q|$ using  Lemma \ref{l:Good_la-01} we see that \eqref{eps-A-alpha-01} holds if 
\begin{align}
\label{eps-A-alpha}
A 2^{p+2}\ln(2/\alpha) \le \frac{\alpha^3(1-\e)^2}{(1-2\alpha + 2\alpha^2)\e^2} -\ln 2 . 
\end{align}
If we put
\begin{align*}
\e^{-1}=  A^{1/2} 2^{(p+3)/2}\alpha^{-3/2} (\ln(2/\alpha))^{1/2}, 
\end{align*}
the inequality \eqref{eps-A-alpha} is satisfied, and the theorem is proved. 
\hfill \qed

\subsection{Unweighted \texorpdfstring{$L^p$}{L^p} estimates for the square function}
The standard technique of good $\lambda$ inequalities also allows one to get the unweighted $L^p$ estimates for the square function. Note that the estimate behaves as $Cp^{1/2}$ as $p\to \infty$

\begin{theorem}
	\label{t:L^p-est}
	For the classical square function $S$ and any $p\in(0,\infty)$ 
	\begin{align}
	\label{L^p-est-01}
	\|f^*\|\ci{L^p} \le 4\cdot 2^{1/p} \alpha^{-3/2} (p+2)^{1/2}\|Sf \|\ci{L^p} .
	\end{align}
\end{theorem}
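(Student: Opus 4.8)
The plan is to run the standard good-$\lambda$ argument once more, but now with the \emph{unweighted} good-$\lambda$ inequality \eqref{goog_la-01} from Lemma~\ref{l:Good_la-01} (which already contains the superexponential tail with the explicit exponent $\alpha^3(1-\e)^2/((1-2\alpha+2\alpha^2)\e^2)$), and to optimize the free parameter $\e$ as a function of $p$. Concretely, summing \eqref{goog_la-01} over the maximal cubes $Q\in\cQ(\lambda)$ (whose disjoint union is $\{f^*>\lambda\}$ up to a null set, by the observation in Section~\ref{s:ProofOf_genCWWmax}) gives the global good-$\lambda$ inequality
\begin{align*}
|\{f^*>2\lambda,\ Sf\le\e\lambda\}| \le 2\exp\!\left(-\frac{\alpha^3(1-\e)^2}{(1-2\alpha+2\alpha^2)\e^2}\right)|\{f^*>\lambda\}|.
\end{align*}
To feed this into the layer-cake formula I need the coefficient $2\exp(-\cdots)$ to be at most $2^{-(p+1)}$, which (taking $\e$ small, so $(1-\e)^2$ close to $1$, and crudely bounding $(1-2\alpha+2\alpha^2)\le 1$) is guaranteed as soon as $\alpha^3/\e^2 \ge (p+2)\ln 2$, i.e.\ as soon as $\e^{-1}\ge \alpha^{-3/2}\big((p+2)\ln 2\big)^{1/2}$; one must just check the elementary inequality $(1-\e)^2 \cdot \alpha^3/\e^2 \ge (p+2)\ln 2 + \ln 2$ for this choice, which is where the factor $(p+2)$ rather than $p$ and the harmless constants come from.

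Once \eqref{w-good_lambda} (with $w\equiv 1$) holds, the standard good-$\lambda$ machinery — exactly as invoked in the proof of Theorem~\ref{t:weighted} — yields
\begin{align*}
\|f^*\|\ci{L^p}\le \e^{-1}2^{(p+1)/p}\|Sf\|\ci{L^p}.
\end{align*}
Substituting $\e^{-1}= \alpha^{-3/2}\big((p+2)\ln 2\big)^{1/2}$ and bounding $2^{(p+1)/p}=2\cdot 2^{1/p}$ and $(\ln 2)^{1/2}\le 1$ gives the claimed bound $\|f^*\|\ci{L^p}\le 4\cdot 2^{1/p}\alpha^{-3/2}(p+2)^{1/2}\|Sf\|\ci{L^p}$, with a little room to spare in the numerical constant ($2\cdot(\ln 2)^{1/2}\approx 1.67 \le 4$). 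I would, however, double-check the exact form of the ``standard good-$\lambda$ reasoning'' step: for $p<1$ the layer-cake integration still works but one should make sure the constant $2^{(p+1)/p}$ (which blows up as $p\to 0$) is what absorbs the small-$p$ behaviour, and that it is still dominated by $4\cdot 2^{1/p}$ — indeed $2^{(p+1)/p}=2^{1+1/p}=2\cdot 2^{1/p}\le 4\cdot 2^{1/p}$ for all $p>0$, so this is fine.

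The main obstacle, such as it is, is purely bookkeeping: verifying that the crude simplifications ($(1-2\alpha+2\alpha^2)\le 1$, $(1-\e)^2$ absorbed into the additive $\ln 2$, $(\ln 2)^{1/2}\le 1$) are all valid simultaneously for the chosen $\e$ and for \emph{every} $\alpha\in(0,1/2]$ and $p>0$, so that no hidden constraint on $\alpha$ or $p$ sneaks in. In particular one should confirm that $\e$ so defined satisfies $\e<1$ (needed for $E\ci Q\subset E^1\ci Q$ etc.\ in Lemma~\ref{l:Good_la-01}): since $\alpha^{-3/2}((p+2)\ln2)^{1/2}\ge 2^{3/2}(2\ln2)^{1/2}>1$, this holds automatically. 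No new idea beyond Lemma~\ref{l:Good_la-01} and the good-$\lambda$ principle is required; the theorem is essentially a corollary recorded for completeness, which matches the remark that the estimate grows like $Cp^{1/2}$ as $p\to\infty$.
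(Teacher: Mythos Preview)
Your approach is exactly the paper's: sum Lemma~\ref{l:Good_la-01} over $Q\in\cQ(\lambda)$, choose $\e$ so that $2\exp(-\alpha^3(1-\e)^2/((1-2\alpha+2\alpha^2)\e^2))\le 2^{-(p+1)}$, and invoke the standard good-$\lambda$ conclusion $\|f^*\|\ci{L^p}\le \e^{-1}2^{(p+1)/p}\|Sf\|\ci{L^p}$.

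The one arithmetic slip is your specific choice $\e^{-1}=\alpha^{-3/2}((p+2)\ln 2)^{1/2}$: after the simplification $1-2\alpha+2\alpha^2\le 1$ this gives $\alpha^3/\e^2=(p+2)\ln 2$ \emph{exactly}, so the remaining factor $(1-\e)^2<1$ makes the required inequality fail (there is no ``additive $\ln 2$'' to absorb it). The paper simply takes $\e^{-1}=2\alpha^{-3/2}(p+2)^{1/2}$, so that $\alpha^3/\e^2=4(p+2)$; then $\e\le 1/8$, hence $(1-\e)^2\ge 49/64$, and $4\cdot\tfrac{49}{64}(p+2)\ge (p+2)\ge (p+2)\ln 2$ with room to spare. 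This is precisely the slack you noticed ($2(\ln 2)^{1/2}\approx 1.67$ versus the target $4$), and substituting it yields exactly $4\cdot 2^{1/p}\alpha^{-3/2}(p+2)^{1/2}$.
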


\begin{proof}
	The standard good $\lambda$ inequalities reasoning implies that if 	
	\begin{align}
	\label{good_lambda-02}
	\left|\{ x\in \cX:  f^*(x)  >2\lambda,  Sf(x)\le \e\lambda  \}\right| \le 2^{-(p+1)} \left|\{x\in\cX: f^*>\lambda \}\right|, 
	\end{align}
	then 
	\begin{align}
	\label{good-lambda_conclusion}
	\|f^*\|\ci{L^p} \le  \e^{-1} 2^{(p+1)/p}  \| Sf \|\ci{L^p}. 
	\end{align}
	Lemma \ref{l:Good_la-01} shows that  \eqref{good_lambda-02}  holds if $\e$ satisfies 
	\begin{align*}
	\frac{\alpha^3 (1-\e)^2}{(1-2\alpha + 2\alpha^2)\e^2} \ge p+2. 
	\end{align*} 
	Since  $\e^{-1}= 2\alpha^{-3/2} (p+2)^{1/2}$ satisfies the above inequality, \eqref{good-lambda_conclusion} immediately gives \eqref{L^p-est-01}.
\end{proof}

\subsection{Comparison with the estimates from \cite{DIPTV_2017}}

It was proved in \cite[Theorem 3.4]{DIPTV_2017} that for the $n$-adic filtration, i.e.~for a filtration where each atom $Q$ has exactly $n$ children of equal measure, the following estimate holds for all $f$ with $\langle f\rangle\ci \cX = 0$:
\begin{align*}
\| f \|\ci{L^2(w)} \lesssim n [w]_{A_\infty\ut{scl}}^{1/2}\|S f\|\ci{L^2(w)} . 
\end{align*}
Here $\cX=[0,1)$ (as we discussed it above in the introduction, we can always assume that without loss of generality), and  $A_\infty\ut{scl}$ means the \emph{semiclassical} $A_\infty$-condition, 
\begin{align}
\label{A_infty-scl}
\sup_{Q\in\cD} {\langle M\ci Q w \rangle\ci Q}/{\langle w \rangle\ci Q} =: [w]_{A_\infty\ut{scl}} <\infty, 
\end{align}
where $M\ci Q$ is the localized classical Hardy--Littlewood maximal function, one the real line 
\begin{align}
\label{H-L_max}
M\ci Q f(x) =\sup \{\langle |f| \rangle\ci I : I\subset Q, I\ni x \}, 
\end{align}
and the supremum is taken over all intervals $I\subset Q$. We assumed here, that our probability space is the unit interval $[0,1)$, which as it was discussed above we can do without loss of generality. 

It can be seen from the proof of \cite[Theorem 3.4]{DIPTV_2017} that for an $\alpha$-homogeneous filtration 
\begin{align}
\label{DIPTV-lb}
\| f\|\ci{L^2(w)} \lesssim \alpha^{-1} [w]_{A_\infty\ut{scl}}^{1/2}\|S f\|\ci{L^2(w)},   
\end{align}
for any $f$ with $\langle f \rangle\ci \cX=0$. 

It was shown in \cite[Theorem 3.3]{DIPTV_2017} that for a general non-homogeneous filtration the  $A_\infty$ condition on the weight $w$ is not sufficient for the estimate $\|f\|\ci{L^2(w)} \le C  \|Sf\|\ci{L^2(w)}$, even if one considers the (stronger) classical $A_\infty$ condition on the real line; in the classical $A_\infty$ condition on the line the supremum in \eqref{A_infty-scl} is taken over all intervals $Q$, not only $Q\in\cD$. Therefore, a dependence on $\alpha$ that blows out when $\alpha\to 0$ must be present in the estimates \eqref{DIPTV-lb}, \eqref{weighted_f-S}.

It can be shown that 
\begin{align}
\label{A_infty-mart-scl}
[w]_{A_\infty\ut{scl}}  \lesssim \alpha^{-1}[w]_{A_\infty}; 
\end{align}
the proof is essentially given in the proof of \cite[Chapter 3, Theorem 3.8]{Bennett-Sharpley_1988}; an obvious  adaptation gives a proof of \eqref{A_infty-mart-scl}.  

Therefore, the estimate \eqref{H-L_max} implies the following estimate in terms of the martingale $A_\infty$ characteristic $[w]\ci{A_\infty}$:
\begin{align}
\label{DIPTV-lb-01}
\| f\|\ci{L^2(w)} \lesssim \alpha^{-3/2} [w]_{A_\infty}^{1/2}\|Sf\|\ci{L^2(w)}.  
\end{align}
Note that this estimate has the same exponent at $\alpha$ as the estimate \eqref{weighted_f-S}; the estimate \eqref{weighted_f-S} has an extra factor $(\ln(2/\alpha))^{1/2}$, but it deals with a bigger function $f^*$ instead of $f$. So, we can say that the asymptotic in $\alpha$ in \eqref{DIPTV-lb} and \eqref{DIPTV-lb-01} are essentially equivalent.

But we do not know how to get using the superexponential estimate \eqref{t:genCWW} a better exponent at $\alpha$ in \eqref{weighted_f-S} by replacing $[w]\ci{A_\infty}$ by the bigger $A_\infty$ characteristic  $[w]_{A_\infty\ut{scl}}$.

\section{From weighted estimates to the superexponential distribution inequalities. } 
\label{s:weight to superexp}

A standard and well-known to experts reasoning allows one to obtain from the proved in \cite{DIPTV_2017} estimate $\|f\|\ci{L^2(w)} \le C [w]_{A_\infty\ut{scl}}^{1/2} \|f\|\ci{L^2(w)}$, see \eqref{DIPTV-lb} above,  the superexponential distribution inequalities  \eqref{t:genCWWmax} (with appropriate parameters). 

For the reader's convenience we summarize the known results here. 
\subsection{Preliminary lemmas}
Recall that a weight $w$ on the real line (or its subinterval) satisfies the (classical) $A_1$ condition if 
\begin{align}
\label{A_1}
\sup_{x\in\R}  \frac{M w (x) }{  w (x)}  := [w]\ci{A_1}<\infty;
\end{align}
here $M$ is the classical Hardy--Littlewood maximal function \eqref{H-L_max}. 

It is well-known and trivial to see that the $A_1$ condition is stronger than $A_\infty$ condition considered in this paper and that the $A_1$ characteristic $[w]\ci{A_1}$ majorates all the $A_\infty$ characteristics. 

Recall that the norm of the Hardy--Littlewood maximal operator $M$ in $L^p(\R)$, $1<p\le\infty$, is estimated by $C\ci M p'$, where $C\ci M$ is an absolute constant and $p'$ is the dual H\"{o}lder exponent, $1/p +1/p'=1$. 

\begin{lemma}
\label{l:RubioD-F}
Let measurable functions $f$ and $g$ on $\R$ (or on an interval $I\subset \R$) be such that for all $A_1$ weights 
\begin{align}
\label{A_1-est-01}
\|f\|\ci{L^2(w)} \le B [w]\ci{A_1}^{\kappa} \|g\|\ci{L^2(w)} \,, 
\end{align}
for some $B<\infty$, $\kappa>0$. 

Then for all $p\ge 2$ we have in the non-weighted $L^p$
\begin{align}
\label{L^p-est}
\|f\|\ci{L^p} \le \sqrt{2} \,C\ci M B p^\kappa \|g \|\ci{L^p}
\end{align}
\end{lemma}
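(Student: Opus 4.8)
The plan is to use the classical Rubio de Francia extrapolation-type argument, running it through a suitable algorithm that builds an $A_1$ weight adapted to the function one wants to test against. First I would fix $p \geq 2$ and let $p'$ be its conjugate exponent. By duality in $L^{p/2}$ (note $p/2 \geq 1$), it suffices to bound $\int f^2 h$ for an arbitrary nonnegative $h \in L^{(p/2)'}$ with $\|h\|_{(p/2)'} = 1$; here $(p/2)' = p/(p-2)$, which is the exponent making $\|f^2\|_{p/2}\|h\|_{(p/2)'}$ pair up correctly, and $\|f\|_{L^p}^2 = \|f^2\|_{L^{p/2}} = \sup \int f^2 h$.

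The core construction is the Rubio de Francia iteration operator. Set $R h := \sum_{k=0}^\infty 2^{-k} M^k h$, where $M^k$ denotes the $k$-fold iterate of the Hardy--Littlewood maximal operator $M$. Then (i) $h \leq Rh$ pointwise; (ii) $Rh$ is an $A_1$ weight with $[Rh]\ci{A_1} \leq 2\|M\|_{L^s \to L^s}$ for the relevant exponent $s$; and (iii) $\|Rh\|_{L^s} \leq 2\|h\|_{L^s}$. One applies this with $s = (p/2)' = p/(p-2)$, whose dual exponent is $s' = p/2$, so $\|M\|_{L^s \to L^s} \leq C\ci M s' = C\ci M p/2$, giving $[Rh]\ci{A_1} \leq C\ci M p$. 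Then
\begin{align*}
\int f^2 h \;\le\; \int f^2 (Rh) \;=\; \|f\|\ci{L^2(Rh)}^2 \;\le\; B^2 [Rh]\ci{A_1}^{2\kappa}\, \|g\|\ci{L^2(Rh)}^2 \;\le\; B^2 (C\ci M p)^{2\kappa}\, \|g^2\|\ci{L^{p/2}} \|Rh\|\ci{L^{(p/2)'}},
\end{align*}
where the last step is Hölder's inequality in $L^{p/2} \times L^{(p/2)'}$, followed by $\|Rh\|_{L^{(p/2)'}} \leq 2\|h\|_{L^{(p/2)'}} = 2$. This yields $\int f^2 h \le 2 B^2 (C\ci M p)^{2\kappa}\|g\|\ci{L^p}^2$; taking the supremum over $h$ and square-rooting gives $\|f\|\ci{L^p} \le \sqrt 2\, B (C\ci M p)^\kappa \|g\|\ci{L^p}$. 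To match the stated constant exactly one absorbs the factor $C\ci M^\kappa \le C\ci M$ (for $\kappa \le 1$, or simply by redefining $C\ci M$) — in fact the cleanest bookkeeping is to track that $[Rh]\ci{A_1}^\kappa \le (C\ci M p)^\kappa$ and note the statement writes $C\ci M$ rather than $C\ci M^\kappa$, so one should either assume $\kappa \le 1$ or enlarge $C\ci M$ to $C\ci M^{\max(1,\kappa)}$; I would add a sentence to that effect.

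The main obstacle, and the only genuinely delicate point, is verifying property (ii) — that $Rh$ lies in $A_1$ with the claimed quantitative bound. This is where the operator-norm estimate $\|M\|_{L^s\to L^s} \le C\ci M s'$ enters: one checks $M(Rh) \le \sum_k 2^{-k} M^{k+1} h \le 2 Rh$ pointwise directly from subadditivity of $M$ and the geometric weights, so $[Rh]\ci{A_1} \le 2$ as an operator-free statement, while convergence of the series in $L^s$ (needed so that $Rh$ is finite a.e. and the manipulations are legitimate) requires $\sum_k 2^{-k}\|M\|_{L^s\to L^s}^k < \infty$, i.e. $\|M\|_{L^s\to L^s} < 2$. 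Since $\|M\|_{L^s\to L^s}\le C\ci M s' = C\ci M p/2$ can be large, one fixes this in the standard way: replace $2^{-k}$ by $(2\|M\|_{L^s\to L^s})^{-k}$ in the definition of $R$, which preserves (i) and (iii) (the series sums to at most $2$) and gives $[Rh]\ci{A_1} \le 2\|M\|_{L^s\to L^s}\le C\ci M p$ (again absorbing the constant $2$ into $C\ci M$). I would state this refined definition of $R$ from the outset and note that all three properties are then routine. Everything else is bookkeeping with Hölder's inequality and the hypothesis \eqref{A_1-est-01}.
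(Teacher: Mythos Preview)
Your proposal is correct and follows essentially the same Rubio de Francia extrapolation argument as the paper: dualize $\|f\|_{L^p}^2$ against $h\in L^{(p/2)'}$, build an $A_1$ majorant $Rh$ via the iterated maximal operator with geometric weights $(C\ci M p)^{-k}$ (which is precisely your corrected definition with $(2\|M\|_{L^s\to L^s})^{-k}$), apply the hypothesis, and finish with H\"older. Your observation about the mismatch between $(C\ci M p)^\kappa$ and $C\ci M p^\kappa$ is well taken---the paper's own proof in fact yields $(C\ci M p)^\kappa$, so the constant in the stated lemma is slightly imprecise for $\kappa\ne 1$ just as you note.
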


\begin{proof}
The proof is the standard Rubio de Francia extrapolation argument, as it is presented on pp.~356--357 of \cite{Feff-Pipher_1997-multiparameter}. 
For $p\ge 2$ and $r=p/2$
\begin{align*}
\|f\|\ci{L^p}^2 =\|f^2\|\ci{L^r} = \sup \left\{ \int_\cX |f|^2  \f : \f\ge 0,\, \|\f\|\ci{L^{r'}} =1   \right\}. 
\end{align*}
Take $\f \in L^{r'}$, $\|\f\|\ci{L^{r'}} =1$. Let $M^1\f=M\f$, $M^{2} \f= M(M\f)$, $M^{n+1} \f = M(M^n \f)$. Define the weight $w$ by 
\begin{align*}
w:= \f + \sum_{n=1}^{\infty} (C\ci M p)^{-n} M^n\f 
\end{align*} 
Since $\|M \f\|\ci{L^{r'}}\le C\ci M r = C\ci M p/2$, we can conclude that 
\begin{align*}
&(C_{M} p)^{-1}\|M\f\|\ci{L^{r'}} \le 2^{-1}\|\f\|\ci{L^{r'}}, \\
&(C_{M} p)^{-n-1}\|M^{n+1}\f\|\ci{L^{r'}} \le 2^{-1}(C_{M}p)^{-n}\| M^{n} \f\|\ci{L^{r'}} = 2^{-(n+1)}  \|\f\|_{L^{r'}}, \quad n\ge 1, 
\end{align*}
so 
\begin{align}
\label{norm_w_in_L^r'}
\|w\|\ci{L^{r'}} \le \|\f\|\ci{L^{r'}} \sum_{k=0}^\infty 2^{-n} = 2 \|\f\|\ci{L^{r'}}, 
\end{align}
and therefore $w$ is finite a.e.

It is easy to see that $Mw \le C\ci M p w$, so $w$ is an $A_1$ weight with $[w]\ci{A_1} \le C\ci M p$ (we need the a.e.~boundedness of $w$ to avoid the case $w\equiv +\infty$). Continuing with the argument, we get 
\begin{align*}
\int_\cX |f|^2  \f \dd x \le \int_\cX |f|^2  w \dd x \le B^2 [w]\ci{A_1}^{2\kappa} \|g\|\ci{L^2(w)}^2 \le B^2 (C\ci M p )^{2\kappa} \|g\|\ci{L^2(w)}^2 . 
\end{align*}
Estimating 
\begin{align*}
\|g\|\ci{L^2(w)}^2 = \int_\cX |g|^2 w \le \| g^2\|\ci{L^r} \|w\|\ci{L^{r'}} = \|g\|\ci{L^p}^2 \|w\|\ci{L^{r'}}
\le 2 \|g\|\ci{L^p}^2,
\end{align*}
(the last inequality holds because of \eqref{norm_w_in_L^r'}) we get that 
\begin{align*}
\int_\cX |f|^2  \f \dd x \le  2 B^2 (C\ci M p )^{2\kappa} \|g\|\ci{L^p}^2, 
\end{align*}
and taking supremum over all $\f\ge 0$, $\|\f\|_{L^{r'}}=1$ we get the conclusion of the lemma. 
\end{proof}

\begin{lemma}
\label{l:L^p-est->superExp}
Let $\kappa>0 $ and let $f\in L^1(\cX)$, $g\in L^\infty(\cX)$, $|\cX|=1$ be such that for all  $p\ge 1/\kappa$
\begin{align}
\label{L^p-L^infty}
\|f\|\ci{L^p} \le B p^\kappa \|g\|\ci{L^\infty}. 
\end{align}
Then for $\gamma= (2e)^{-1}\kappa B^{-1/\kappa}\|g\|\ci{L^\infty}^{-1/\kappa}$
\begin{align}
\label{superexp_integrability}
\int e^{\gamma |f|^{1/\kappa}} \le C<\infty
\end{align}
where
\begin{align}
\label{C_in_superexp_integrability}
C= 1 + \sum_{n=1}^\infty \frac{(n/(2e))^n}{n!} <\infty
\end{align}
\end{lemma}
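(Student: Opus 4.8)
The plan is to expand the exponential into its Taylor series, integrate term by term, and use the hypothesis \eqref{L^p-L^infty} to bound each moment $\|f\|\ci{L^p}$ — the only subtlety being that the hypothesis is available only for $p\ge 1/\kappa$, which is exactly the range we need. Concretely, write
\begin{align*}
\int_\cX e^{\gamma|f|^{1/\kappa}} \dd x = 1 + \sum_{n=1}^\infty \frac{\gamma^n}{n!}\int_\cX |f|^{n/\kappa}\dd x = 1 + \sum_{n=1}^\infty \frac{\gamma^n}{n!}\,\|f\|\ci{L^{n/\kappa}}^{n/\kappa},
\end{align*}
where the interchange of sum and integral is justified by monotone convergence (all terms nonnegative). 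For each $n\ge 1$ we have $p:=n/\kappa\ge 1/\kappa$, so \eqref{L^p-L^infty} applies and gives $\|f\|\ci{L^{n/\kappa}}\le B (n/\kappa)^\kappa \|g\|\ci{L^\infty}$, hence
\begin{align*}
\|f\|\ci{L^{n/\kappa}}^{n/\kappa} \le \bigl(B\|g\|\ci{L^\infty}\bigr)^{n/\kappa} (n/\kappa)^{n}.
\end{align*}

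Substituting this bound and the definition $\gamma = (2e)^{-1}\kappa B^{-1/\kappa}\|g\|\ci{L^\infty}^{-1/\kappa}$, the factors $(B\|g\|\ci{L^\infty})^{n/\kappa}$ cancel against $\gamma^n$ up to the constant $(2e)^{-n}\kappa^n$, and the $\kappa^n$ cancels the $(1/\kappa)^n$ coming from $(n/\kappa)^n$. What remains is precisely
\begin{align*}
\frac{\gamma^n}{n!}\,\|f\|\ci{L^{n/\kappa}}^{n/\kappa} \le \frac{1}{n!}\Bigl(\frac{1}{2e}\Bigr)^n n^n = \frac{(n/(2e))^n}{n!},
\end{align*}
so that $\int_\cX e^{\gamma|f|^{1/\kappa}}\dd x \le 1+\sum_{n=1}^\infty \frac{(n/(2e))^n}{n!}$, which is the asserted constant $C$ in \eqref{C_in_superexp_integrability}.

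It remains only to check that this series converges, i.e.\ that $C<\infty$. This is where Stirling's formula does the work: $n!\ge \sqrt{2\pi n}\,(n/e)^n$, so the $n$-th term is at most $\frac{1}{\sqrt{2\pi n}}\,(n/(2e))^n (e/n)^n = \frac{1}{\sqrt{2\pi n}}\,2^{-n}$, and $\sum_n 2^{-n}/\sqrt{2\pi n}<\infty$. I expect this convergence estimate to be the only place any real inequality (as opposed to bookkeeping) is used; everything else is the algebraic cancellation engineered by the precise choice of $\gamma$. The one point to be careful about is that $f\in L^1$ together with the moment bounds already forces $f\in L^p$ for all finite $p$, so all the quantities $\|f\|\ci{L^{n/\kappa}}$ are finite and the term-by-term manipulation is legitimate; the finiteness of $C$ then yields $e^{\gamma|f|^{1/\kappa}}\in L^1$ as claimed.
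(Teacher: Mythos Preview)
Your proof is correct and follows essentially the same approach as the paper's own proof: expand the exponential as a power series, apply the moment bound \eqref{L^p-L^infty} with $p=n/\kappa$ to each term, and observe that the chosen $\gamma$ makes the resulting series exactly $\sum_{n\ge1}(n/(2e))^n/n!$. The only cosmetic difference is that the paper invokes the ratio test for convergence while you use Stirling, and you make the monotone-convergence justification explicit.
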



\begin{remark*}
	The series in \eqref{C_in_superexp_integrability} trivially converges by the ratio test. 
\end{remark*}
\begin{proof}[Proof of Lemma \ref{l:L^p-est->superExp}]
	Representing $e^{\gamma |f|^{1/\kappa}} $ as a power series and using \eqref{L^p-L^infty} we get 
	\begin{align*}
	\int e^{ \gamma |f|^{1/\kappa}} \dd x = 1 + \sum_{n=1}^\infty \frac{\gamma^n}{n!} \| f\|_{L^{n/\kappa}}^{n/\kappa}
	& \le 1 + \sum_{n=1}^\infty \frac{\gamma^n}{n!} B^{n/\kappa} \left(\frac{n}{\kappa}\right)^{n} \| g\|_{L^\infty}^{n/\kappa}
	\\
	& = 1 + \sum_{n=1}^\infty \frac{(n/(2e))^n}{n!}  =C<\infty;
	\end{align*}
	as it was mentioned in the remark above, the convergence of the series follows easily from the ratio test. 
\end{proof}

\begin{remark}
	\label{r:Lp-est->superexp}
	As one can see from the proof of Lemma \ref{l:L^p-est->superExp} it is sufficient to assume that \eqref{L^p-L^infty} holds only for all  sufficiently large $p$. Indeed, suppose the estimate 
	\begin{align*}
	\| f \|_{L^{n/\kappa}}^{n/\kappa} \le B^{n/\kappa} \| g\|_{L^\infty}^{n/\kappa}
	\end{align*}
	holds only for $n\ge N$. But by monotonicity  for $1\leq \ell \leq N$ we have 
	\begin{align*}
	\| f \|_{L^{\ell/\kappa}}^{\ell/\kappa}\leq 1+\| f \|_{L^{N/\kappa}}^{N/\kappa}  \le 1+ B^{N/\kappa} \| g\|_{L^\infty}^{N/\kappa},
	\end{align*}
	so one can estimate
	\begin{align*}
	\int e^{ \gamma |f|^{1/\kappa}} \dd x \le e^{\gamma} + N \frac{(N/(2e))^N}{N!} +   \sum_{n=N+1}^\infty \frac{(n/(2e))^n}{n!} =: C_1 <\infty. 
	\end{align*}
\end{remark}

\subsection{Obtaining superexponential estimates from \eqref{DIPTV-lb}}
Taking $\kappa=1/2$ and applying lemma \ref{l:RubioD-F}  to the estimate \eqref{DIPTV-lb}  we get that 
for $f\in L^1(\cX)$ with $\langle f \rangle\ci\cX=0$ and $p\ge 2$
\begin{align}
\label{f^*le_Sf-old}
\|f^*\|\ci{L^p} \lesssim \alpha^{-1} p^{1/2} \|Sf\|\ci{L^p}.
\end{align}
Lemma \ref{l:L^p-est->superExp} then gives us
\begin{align}
\label{superexp_integrability-01}
\int_\cX e^{c\alpha^2|f|^2/\|Sf\|_\infty^2} \dd x \le C<\infty. 
\end{align}
This inequality in turn implies the superexponential distribution inequality
\begin{align}\label{superexp_distr-01}
\left| \{ x\in \cX:| f(x) | > \lambda    \}  \right| \leq  C e^{- c\alpha^2\lambda ^{2}/ \|S f\|_{\infty}^2}.
\end{align}

\begin{remark}
	\label{r:dependence_on_alpha}
Estimate \eqref{f^*le_Sf-old} gives us better dependence on the homogeneity parameter $\alpha$ then the estimate \eqref{L^p-est-01} that we got from the superexponential estimate \eqref{distr_of_psi^*}; the drawback of \eqref{f^*le_Sf-old} is that it holds only for $p\ge2$, versus $0<p<\infty $ in \eqref{L^p-est-01}. 

However, the superexponential estimate \eqref{superexp_distr-01} we got from there gives us worse dependence on $\alpha$ then the estimates \eqref{ext},  \eqref{distr_of_psi^*}.
\end{remark}

\subsection{A discussion}

It is a remarkable result of Wang~\cite{GWang} that for any  $3 \leq p <\infty$   and any {\em conditionally symmetric martingale} $f$ we have the sharp bound
\begin{align}\label{bilyk}
\| f\|_{p} \leq  R_{p} \| S f\|_{p}, \quad \mathbb{E}f=0, 
\end{align}
where $R_{p}$ is the  rightmost zero of the Hermite function $H_{p}(x)$. Here $H_{p}(x)$ is the solution of the Hermite differential equation 
\begin{align*}
H''_{p}-xH'_{p}+pH_{p}=0
\end{align*}
which grows relatively slowly at infinity, i.e., $H_{p}(x)= x^{p}+o(x^{p})$ as $x \to \infty$. It is known that $R_{p}=O(\sqrt{p})$ as $p \to \infty$. In our context  ``conditionally symmetric'' martingale  means 
that  for any dyadic cube $Q$ the martingale differences $\Delta\ci Qf$ and $-\Delta\ci Q f$ have the same distribution function. 
  
 Clearly that for the standard dyadic filtration  on $[0,1)^{d}$ not all martingales are conditionally symmetric. In fact, for such a filtration any $f\in L^1$ defines a conditionally symmetric martingale   if and only if  $d=1$.  

Thus we obtain that if $f$ is conditionally symmetric martingale with  $\mathbb{E}f=0$ and $\|Sf\|_{\infty}<0$,  then estimate (\ref{bilyk}), combined with the facts $R_{p} = O(\sqrt{p})$, $\|Sf\|_{p}\leq \|Sf\|_{\infty}$,  and Lemma~\ref{l:L^p-est->superExp} implies the superexponential bound 
\begin{align}\label{bbcor}
\left| \{ x\in \cX:| f(x) | > \lambda    \}  \right| \leq  C e^{- c \lambda ^{2}/ \|S f\|_{\infty}^2}.
\end{align}

\bigskip

Finally we remark that for the classical dyadic square function on $[0,1)^{d}$ there is no estimate of the type (\ref{bilyk}), i.e.,   
$$
\| f\|_{p} \leq C \sqrt{p} \|S f\|_{p}, \quad \mathbb{E} f=0
$$
with $C$ independent of $d$, and $p>N$ for some positive $N$. Indeed,  otherwise this would give us superexponential bound (\ref{bbcor}) independent of dimension $d$, but we will see in the next section  that the estimate 
$$
|\{x \in [0,1)^{d}\, : f(x) > \lambda \}| \leq e^{-\lambda^{2}/(2^{d}\|Sf\|_{\infty}^{2})}
$$
is sharp when dimension $d$ goes to infinity, see Theorem~\ref{t:sharpness-02}.


\section{Sharpness of Theorems~\ref{trpi0}, \ref{t:genCWW}}
\subsection{Sharpness of theorem \ref{trpi0}}
We remark that the constant $\alpha$  in the left hand side of  \eqref{trpi1} is sharp when $\alpha=1/2$, in this case  even Theorem~\ref{Slavin} is sharp (see the proof of Lemma~3.3 in  ~\cite{Slavin1}).   It is also true asymptotically as $\alpha\to 0$, in the sense that $\alpha$ in \eqref{trpi1} cannot be replaced by $C\alpha$ with $C>1$ independent of $\alpha$ and the estimate being true for all $\alpha>0$. Indeed, this is true even for the square function $S$ in the $n$-adic filtration (here $\alpha=1/n$). To verify the sharpness of the estimate for the  case $n \to \infty$, which corresponds to $\alpha\to 0$,  assume the contrary that we have 
\begin{align}\label{bolo11}
\int_{Q_{0}} \exp\left(f - C\frac{2^{d}S(f)^{2}}{4} \right) \leq \exp\left(\int_{Q_{0}}f\right)
\end{align}
for some $C$ with $0<C<1$ and $d$ sufficiently large. Consider the dyadic cubes $Q\in  \mathcal{D}_{1}$ of the first generation, i.e., $|Q|=2^{-d}$. We remind that $Q_{0}=[0,1)^{d}$.  In total we have $2^{d}$ such cubes. Take any $f$ such that $f=2^{1-d}$ on $2^{d}-1$ such cubes, and $f=2(2^{-d}-1)$ on the remaining last cube. Clearly 
$$
\int_{[0,1]^{d}} f = 2^{1-d}(2^{d}-1)2^{-d}+2(2^{-d}-1)2^{-d}=0.
$$ 
  Inequality \eqref{bolo11} is equivalent to  showing that 
\begin{align}\label{rm1}
(1-\alpha) \exp\left(2\alpha - C\alpha\right) + \alpha \exp\left(2(\alpha-1) - C\frac{(\alpha-1)^{2}}{\alpha}\right)\leq 1, 
\end{align}
where $\alpha=2^{-d}$. 
By Taylor's formula the left hand side of \eqref{rm1} at point $\alpha=0$ can be written as $1+(1-C)\alpha+O(\alpha^{2})$. Therefore we  see that  the condition $C\geq 1$ is necessary for the inequality to hold. 

\bigskip

Our next remark is that for each fixed $\alpha \in (0,1/2)$ one can replace the constant $\alpha$ in the denominator  in exponent of the left hand side of  \eqref{trpi1} by a better constant $C(\alpha)$ with $C(\alpha)\geq \alpha$. Repeating our arguments without any changes one can show that the largest constant $C=C(\alpha)$ one can have in the inequality 
\begin{align*}
\mathbb{E}   \exp\left(f- \frac{S(f)^{2}}{4C} \right) \leq \exp\left(\mathbb{E} f\right).
\end{align*}
coincides with the largest number $C$ which satisfies the inequality 
\begin{align*}
p_{1} e^{c_{1}- \frac{c_{1}^{2}}{4C}} + p_{2} e^{c_{2}- \frac{c_{2}^{2}}{4C}}\leq 1,
\end{align*}
for all $c_{1}, c_{2} \in \mathbb{R}$, any $p_{1}, p_{2}\geq \alpha$ with $p_{1}+p_{2}=1$ and $p_{1}c_{1}+p_{2}c_{2}=0$. The best possible such constant $C=C(\alpha)$  can be found, but it does not have a nice form, and we did not find it useful for the practical purposes. However, we just showed that $C(\alpha)\geq \alpha$ and the latter estimate interpolates in a sharp way the endpoint cases  for the range $\alpha \in (0,1/2]$.


\subsection{Sharpness of Theorem \ref{t:genCWW}}

\begin{theorem}
	\label{t:sharpness-02}
Theorem \ref{t:genCWW} is sharp when $\alpha \to 0$, meaning that $\alpha$ in the exponent in the right hand side of \eqref{ext} cannot be replaced by $c\alpha$ with $c>1$. 

More precisely, given $c>1$, for all sufficiently small $\alpha$ and for all $\lambda>\lambda(\alpha)$ there exists an $\alpha$-homogeneous filtration and a function $f$, $\|Sf\|_\infty=1$ such that 
\begin{align}
\label{sharpness-02}
|\{ x\in\cX : f(x)>\lambda  \}| > e^{-c\alpha\lambda^2}
\end{align}
\end{theorem}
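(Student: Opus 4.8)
The plan is to build, for each target $\lambda$, a two-branch $\alpha$-homogeneous filtration on $\cX=[0,1)$ carrying a ``spine'' martingale that drifts to a high value with probability essentially $(1-\alpha)^N$ while keeping $\|Sf\|_\infty$ under control, and then to tune the number of steps $N$.

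First I would fix the filtration: let every atom split into a ``big'' child of relative measure $1-\alpha$ and a ``small'' child of relative measure $\alpha$; since $\alpha\le 1/2$ this is $\alpha$-homogeneous. Writing $Q_0=\cX\supset Q_1\supset Q_2\supset\cdots$ for the nested big children (so $|Q_k|=(1-\alpha)^k$) and $Q_k'$ for the small child splitting off at step $k$, I would set $f:=f_N$, where $f_0=0$ and $f_k-f_{k-1}$ is supported on $Q_{k-1}$, equal to $a$ on $Q_k$ and to $-b$ on $Q_k'$ with $b=(1-\alpha)a/\alpha$ so that it has zero mean, and $f_m=f_N$ for $m\ge N$. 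A direct computation gives $Sf\equiv a\sqrt N$ on $Q_N$ and $Sf=\sqrt{(k-1)a^2+b^2}$ on $Q_k'$; since $b^2\ge a^2$ for $\alpha\le 1/2$, this yields $\|Sf\|_\infty^2=a^2\bigl(N-1+(1-\alpha)^2/\alpha^2\bigr)$, and after rescaling $a$ I may assume $\|Sf\|_\infty=1$. Then $f\equiv\Lambda(N):=N\bigl(N-1+(1-\alpha)^2/\alpha^2\bigr)^{-1/2}$ on $Q_N$, a set of measure $(1-\alpha)^N$, and $f$ has mean zero.

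The heart of the matter is the inequality $(1-\alpha)^N>\exp\bigl(-c\alpha\,\Lambda(N)^2\bigr)$. Taking logarithms and using $\ln\frac1{1-\alpha}<\frac\alpha{1-\alpha}$, it suffices to check $N-1+(1-\alpha)^2/\alpha^2\le c(1-\alpha)N$; once $\alpha$ is small enough that $c(1-\alpha)>1$ this holds for every $N$ beyond an explicit threshold $N_0(\alpha,c)$. For such $N$ the function above already violates the estimate \eqref{ext} at level $\Lambda(N)$, since $|\{f>\Lambda(N)/2\}|\ge|Q_N|$ and in fact $|\{f>\mu\}|\ge|Q_N|$ for every $\mu<\Lambda(N)$.

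The remaining, and main, technical point is that $\Lambda(N)$ runs through a discrete set, whereas we must handle every $\lambda>\lambda(\alpha)$. Here I would run the previous step with an intermediate constant $c'$, $1<c'<c$, to get $(1-\alpha)^N>\exp(-c'\alpha\Lambda(N)^2)$ for $N>N_0(\alpha,c')$. Given $\lambda$ above a suitable threshold, put $N=N(\lambda):=\min\{N:\Lambda(N)>\lambda\}$; since $\Lambda$ is strictly increasing with $\Lambda(N)\to\infty$ and, crucially, $\Lambda(N)^2-\Lambda(N-1)^2\to1$ as $N\to\infty$, one has $\Lambda(N)^2\le\Lambda(N-1)^2+2\le\lambda^2+2$ for $\lambda$ large. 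Then $|\{f>\lambda\}|\ge|Q_N|=(1-\alpha)^N>\exp\bigl(-c'\alpha(\lambda^2+2)\bigr)\ge\exp(-c\alpha\lambda^2)$ as soon as $(c-c')\lambda^2\ge 2c'$, i.e.\ for $\lambda$ past a threshold depending only on $c,c'$; absorbing also the $\alpha$-dependent lower bound on $N(\lambda)$ needed above produces the required $\lambda(\alpha)$. I expect this discretization and asymptotic bookkeeping — pinning down the gap $\Lambda(N)-\Lambda(N-1)$ and the interplay of $c$ and $c'$ — to be the only genuine obstacle; the construction and the square-function computation are routine.
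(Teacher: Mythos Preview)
Your proposal is correct and follows essentially the same approach as the paper: the paper builds exactly this spine martingale (with $a=1$) on an $\alpha$-homogeneous filtration, obtaining $\|Sg_N\|_\infty^2=N-1+(1-\alpha)^2/\alpha^2$ and $g_N\ge N$ on a set of measure $(1-\alpha)^N$, then normalizes and picks $N(\lambda)$ just as you do. Your handling of the discretization via the intermediate constant $c'$ and the gap estimate $\Lambda(N)^2-\Lambda(N-1)^2\to1$ is in fact slightly more explicit than the paper's, which relies on the asymptotic equivalences $N(\lambda)\sim\lambda^2$ and $\ln(1-\alpha)\sim-\alpha$ together with an intermediate constant $c_1\in(1,c)$.
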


As a corollary we can see that the superexponential estimate like \eqref{ext} fails for non-homogeneous filtrations. 

\begin{proof}[Proof of Theorem \ref{t:sharpness-02}.]

Let us first describe the construction for $\alpha=1/n$, $n>2$, the construction for general $\alpha$ will be the same with obvious modifications. So, let $\alpha=1/n$. Consider the standard $n$-adic filtration. 

Any set $E\in\cF_k$ an be split into two disjoint sets $E^\alpha, E^\beta \in\cF_{k+1}$ such that
\begin{align*}
|E^\alpha| = \alpha |E|, \qquad |E^\beta| =(1-\alpha)|E|;
\end{align*}
the set $E^\alpha$ can be constructed, for example, by picking one of the intervals $I\in \cD_{k+1}$ out of each interval $J\in\cD_k$ comprising $E$, and the set $E^\beta$ will be the rest. 

We start with $E_0=\cX$, and  split it as above into the sets $E_0^\alpha, E_0^\beta\in\cF_1$, and denote $E_1:=E_0^\beta$. We then repeat the same procedure for $E_1$, and inductively construct sets $E_k\in\cF_k$ by defining
$E_{k+1}:= E_k^\beta$. 

The same construction works for general $\alpha$. We start with $\cX$, which is the only element of $\cD_0$, and construct the sets $\cD_k$ inductively. Namely, to get $\cD_{k+1}$ we split each $I\in\cD_k$ into finitely many intervals $I_j$ such that $|I_1|= \alpha |I|$ and $|I_j|\ge \alpha |I|$ for $j\ge2$. Clearly, that in this situation we can split a set $E\in \cF_k$ into the sets $E^\alpha, E^\beta\in \cF_{k+1}$ as above, and construct inductively the sets $E_k$.  

For now let $\alpha$ be fixed. Define the functions 
\begin{align*}
d_k :=\1\ci{E_k^\beta} - \frac{1-\alpha}{\alpha}\1\ci{E_k^\alpha}. 
\end{align*}  
Clearly the functions $d_k$ are $\cF_{k+1}$-measurable and $\bE_k d_k=0$, so they are martingale differences. 

In what follows we will use the notation $f\sim g$ for the \emph{sharp} asymptotic equivalence of $f$ and $g$, meaning $\lim(f/g) = 1$; when it is not evident from the context, the direction for the limit will always be specified.

Take $N>1/\alpha^2$. Then an easy observation shows that for $g\ci N:= \sum_{k=0}^{N-1} d_k$
\begin{align*}
\|S g\ci N\|_\infty & = \left( N-1 +(1-\alpha)^2/\alpha^2  \right)^{1/2}  \sim N^{1/2} \qquad\text{as } N\to \infty
\intertext{ and that }
g\ci N(x) &\ge N \qquad \text{on a set of measure } (1-\alpha)^N
\end{align*}

For sufficiently large $\lambda>0$ define $N(\lambda)$ to be the smallest integer $N$ such that $N/\|Sg\ci N\|_\infty \ge\lambda$; clearly 
\begin{align*}
N(\lambda) \sim \lambda^2, \quad N(\lambda)/\|Sg\ci{N(\lambda)}\|_\infty \sim \lambda \qquad \text{as }\lambda\to \infty. 
\end{align*}
Define $f=f_\lambda := g\ci{N(\lambda)} /\|S g\ci{N(\lambda)}\|_\infty$. Then $\|Sf_{\lambda}\|_\infty=1$ and $f_\lambda(x) \ge \lambda$ on a set of measure 
\begin{align}
\label{equiv-02}
(1-\alpha)^{N(\lambda)} \sim (1-\alpha)^{\lambda^2} = e^{\lambda^2\ln(1-\alpha)} \qquad\text{as}\quad \lambda\to\infty. 
\end{align}
Noticing that $\ln(1-\alpha)\sim -\alpha$ as $\alpha\to 0$ concludes the proof. 

Namely, for $c>1$ take $c_1$, $c<c_1<1$. Since $\lim_{\alpha\to 0^+} \alpha^{-1}\ln(1-\alpha)=-1$ we conclude that for all sufficiently small $\alpha$, $0<\alpha\le\alpha_0(c_1)$
\begin{align*}
\ln(1-\alpha)\ge -c_1 \alpha. 
\end{align*}
So, using the estimate \eqref{equiv-02} we see that for a fixed sufficiently small $\alpha$ as above, the estimate  \eqref{sharpness-02} holds for all sufficiently large $\lambda$, i.e.~for all $\lambda>\Lambda(c, \alpha)$. 
\end{proof}

\section*{Acknowledgements} We are very grateful to Dmitriy Bilyk, Andrei Lerner, Ramon van  Handel and Pavel Zatitskiy for references and helpful discussions.


\begin{thebibliography}{1}



\bibitem{azuma}
K.~Azuma, \emph{Weighted Sums of Certain Dependent Random Variables},  Tohoku Mathematical Journal. \textbf{19} (3): 357--367 (1967)



\bibitem{Bennett-Sharpley_1988}
C.~Bennett and R.~Sharpley, \emph{Interpolation of operators}, Pure and
Applied Mathematics, vol. 129, Academic Press Inc., Boston, MA, 1988.


\bibitem{piquard_2008}
L.~Ben-Efraim and F.~Lust-Piquard, \emph{Poincar\'e type inequalities on the discrete cube and in the CAR algebra}, Probability Theory and Related Fields, Volume 141, Issue 3--4, pp 569--602, (2008)

  
\bibitem{ChWiWo1985}
S.-Y.~A.~Chang, J.~M.~Wilson, and T.~H.~Wolff, \emph{Some weighted norm
  inequalities concerning the {S}chr\"odinger operators}, Comment. Math. Helv.
  \textbf{60} (1985), no.~2, 217--246. 
  
  
\bibitem{DIPTV_2017}
K.~{Domelevo}, P.~{Ivanisvili}, S.~{Petermichl}, S.~{Treil}, and A.~{Volberg},
\emph{{On the failure of lower square function estimates in the
		non-homogeneous weighted setting}}, arXiv:1705.08376 [math.AP] (2017), 26 pp.


  
  
  

\bibitem{Feff-Pipher_1997-multiparameter}
R.~Fefferman and J.~Pipher, \emph{Multiparameter operators and sharp weighted
	inequalities}, Amer. J. Math. \textbf{119} (1997), no.~2, 337--369.


\bibitem{hoef}
W.~Hoeffding, \emph{Probability inequalities for sums of bounded random variables}, Journal of the American Statistical Association \textbf{58} (301), 13--30 (1963)
  








\bibitem{Slavin1}
L.~Slavin, A.~Volberg, \emph{The explicit BF for the dyadic Chang--Wilson--Wolff theorem. The $s$-function and the exponential integral}, Contemp. Math. \textbf{444} (2007). 



\bibitem{GWang}
G.~Wang, \emph{Sharp square-function inequalities for conditionally symmetric martingales}, Trans. Amer. Math. Soc. \textbf{328} (1991), no.~1, 393--419.



\bibitem{Wilson1989}
J.~M.~Wilson, \emph{Weighted norm inequalities for the continuous square
  function}, Trans. Amer. Math. Soc. \textbf{314} (1989), no.~2, 661--692.
 
 \bibitem{Wilson1989-2}
J.~M.~Wilson, \emph{$L^{p}$ weighted norm inequalities for the square function, $0<p<2$}, Illinois J. Math. \textbf{33} (1989), Iss.~3, 361--366.


\end{thebibliography}
\end{document}